\def\frk{\frak}               
\def\pp{{\frk p}}
\def\mm{{\frk m}}
\def\Phi{{\frk n}}
\def\Phi{{\frk N}}
\def\opn#1#2{\def#1{\operatorname{#2}}} 
\opn\chara{char} \opn\length{\ell} \opn\pd{pd} \opn\rk{\lk}\opn\link{link}
\opn\projdim{proj\,dim} \opn\injdim{inj\,dim} \opn\rank{rank}\opn\Var{Var}
\opn\depth{depth} \opn\and{and} \opn\grade{grade}
\opn\height{height} \opn\embdim{emb\,dim} \opn\codim{codal}
\opn\Tr{Tr} \opn\bigrank{bigrank}
\opn\superheight{superheight}\opn\lcm{lcm}
\opn\trdeg{trdeg}
\opn\reg{reg} \opn\lreg{lreg} \opn\ini{in} \opn\mod{mod}
\opn\div{div} \opn\Div{Div} \opn\cl{cl} \opn\Cl{Cl}
\opn\Spec{Spec} \opn\Supp{Supp} \opn\supp{supp} \opn\Sing{Sing}
\opn\Ass{Ass} \opn\Min{Min} \opn\Var{Var}
\opn\Ann{Ann} \opn\Rad{Rad} \opn\Soc{Soc}
\opn\Im{Im} \opn\Spec{Spec
}
\opn\inf{inf}
 \opn\Ker{Ker} \opn\Coker{Coker} \opn\Am{Am} \opn \inf{inf}
\opn\Hom{Hom} \opn\Tor{Tor} \opn\Ext{Ext} \opn\End{End} \opn\cd{cd}
\opn\Aut{Aut} \opn\id{id}
\opn\nat{nat}
\opn\pff{pf}
\opn\Pf{Pf} \opn\GL{GL} \opn\SL{SL} \opn\mod{mod} \opn\ord{ord}
\opn\cl{cl} \opn\conv{conv} \opn\ext{ext} \opn\rad{rad}
\opn\star{star} \opn\red{red}\opn\H{H} \opn\bight{bight}
\opn\aff{aff} \opn\con{conv} \opn\relint{relint} \opn\st{st}
\opn\lk{lk} \opn\cn{cn} \opn\core{core} \opn\vol{vol}
\opn\link{link} \opn\star{star}  \opn\mdepth{mdepth}
\opn\gr{gr}
\def\pot#1#2{#1[\kern-0.28ex[#2]\kern-0.28ex]}
\opn\dirlim{\underrightarrow{\lim}}
\opn\inivlim{\underleftarrow{\lim}}
\let\union=\cup
\let\sect=\cap
\let\dirsum=\oplus
\let\iso=\cong
\let\Sect=\bigcap
\let\to=\rightarrow
\def\Implies{\ifmmode\Longrightarrow \else
     \unskip${}\Longrightarrow{}$\ignorespaces\fi}
\def\implies{\ifmmode\Rightarrow \else
     \unskip${}\Rightarrow{}$\ignorespaces\fi}
\def\iff{\ifmmode\Longleftrightarrow \else
     \unskip${}\Longleftrightarrow{}$\ignorespaces\fi}
\newtheorem{Theorem}{Theorem}[section]
\newtheorem{Lemma}[Theorem]{Lemma}
\newtheorem{Corollary}[Theorem]{Corollary}
\newtheorem{Proposition}[Theorem]{Proposition}
\newtheorem{Remark}[Theorem]{Remark}
\newtheorem{Example}[Theorem]{Example}
\newtheorem{Definition}[Theorem]{Definition}
\newtheorem{Fact}[Theorem]{Fact}
\let\epsilon\varepsilon
\let\phi=\varphi
\let\kappa=\varkappa
\def\qed{\ifhmode\textqed\fi
   \ifmmode\ifinner\quad\qedsymbol\else\dispqed\fi\fi}
\def\textqed{\unskip\nobreak\penalty50
    \hskip2em\hbox{}\nobreak\hfil\qedsymbol
    \parfillskip=0pt \finalhyphendemerits=0}
\def\dispqed{\rlap{\qquad\qedsymbol}}
\opn\dis{dis}
\def\pnt{{\raise0.5mm\hbox{\large\bf.}}}
\begin{document}

\title{Squarefree monomial ideals with maximal depth}

\author{ Ahad Rahimi}

\subjclass[2010]{ 13C15, 05E40}.
\keywords{Maximal depth,  Cycle and line graphs, Whisker graph, Transversal polymatroidal ideals, Powers of ideals.}

\address{ Ahad Rahimi, Department of Mathematics, Razi University, Kermanshah, Iran}\email{ahad.rahimi@razi.ac.ir}

\begin{abstract}
Let $(R,\mm)$ be a Noetherian local ring and $M$ a finitely generated $R$-module.  We say $M$ has maximal depth if there is an associated prime $\pp$ of $M$  such that $\depth M=\dim R/\pp$. In this paper we study squarefree monomial ideals which have maximal depth. Edge ideals of cycle graphs, transversal polymatroidal ideals and high powers of connected bipartite graph with this property are classified.
\end{abstract}

\maketitle

\section*{Introduction}
Let $K$ be a field and $(R,\mm)$ be a Noetherian local ring, or a standard graded $K$-algebra with graded
maximal ideal $\mm$. Let $M$ be a finitely generated $R$-module.
A basic fact in commutative algebra says that
\[
\depth M\leq \min\{\dim R/\pp: \pp \in \Ass(M)\}.
\]
We set $\mdepth_R M=\min\{\dim R/\pp: \pp \in \Ass(M)\}$. For simplicity, we write $\mdepth M$ instead of $\mdepth_RM$. We say $M$ has {\em maximal depth} if the equality holds, i.e., $\depth M =\mdepth M.$ In other words, there is an associated prime $\pp$ of $M$  such that $\depth M=\dim R/\pp$. In this paper, we study squarefree monomial ideals with maximal depth.

Let $I\subset S=K[x_1,\ldots,x_n]$ be a squarefree monomial ideal. We say $I$ has maximal depth if $S/I$ has maximal depth.  We observe that, $I$ has maximal depth is equivalent to say that $\reg I^\vee$ is the maximum degree of the generators of $I^\vee$.  This fact motivates us to work on squarefree monomial ideals with maximal depth. Here $I^\vee$ is the {\em Alexander dual} of $I$ and $\reg M$ denotes the regularity
of a finitely generated graded $S$-module $M$.

Several authors have been working on this topic and some known results in this regards are as follows: If $I \subset S$ is a generic monomial ideal, then it has maximal depth, see \cite[Theorem 2.2]{MSY}. If a monomial ideal $I$ has maximal depth, then so does its polarization, see \cite{FT}. Algebraic properties and some classifications of modules with maximal depth are given in \cite{R}.

In \cite{SJ},  the depth of the line graph $L_n$ is explicitly computed. In Section 2, we compute the depth of the line graph $L_n$ in a different way. Our proof relies on the fact that trees, and line graphs in particular, have maximal depth, see Proposition~\ref{line}.

In \cite{SJ},  the depth of the cycle graph $C_n$ of length $n$ is also computed.  This number is independent
of the characteristic of the chosen field. By using this result, we classify all cycle graphs $C_n$ which have maximal depth. In fact, $C_n$ has maximal depth if and only if $n\equiv 0(\mod 3)$ or $n\equiv 2(\mod 3)$, see Proposition~\ref{cycle2}.

Adding a whisker to $C_n$ at a vertex $x_1$  means adding a new vertex
$x_{n+1}$  and the edge $\{x_1, x_{n+1}\}$ to $C_n$.   We denote by $C_n\cup W(x_1)$  the graph
obtained from $C_n$  by adding a whisker at $x_1$.  By using Proposition \ref{line}, we show that  $C_n$ and $C_n\cup W(x_1)$ have the same depth as well as  $C_n\cup W(x_1)$ has maximal depth.


In Section 3, we consider the transversal polymatroidal ideals. A transversal polymatroidal ideal is an ideal $I$ of the form
$I = \pp_{F_1}\dots\pp_{F_r}$  where $F_1, \dots, F_r$ is a collection of non-empty subsets of $[n]$ with $r\geq 1$. Here for a non-empty subset $F$  of $[n]$, we denote by $\pp_F$ the monomial prime
ideal $(\{x_i: i \in F\})$. The depth of a transversal polymatroidal ideal is explicitly given in \cite{HRV}. By applying this result, we classify all transversal polymatroidal ideals which have maximal depth. In fact, we prove the following: Let $I\subset S$ be a transversal polymatroidal ideal. Then,  $I$ has maximal depth if and only  $I$ is a product of monomial prime ideals such that at most one of the factors is not principal. In the following, we also classify ideals of Veronese type which have maximal depth.

In the final section, we consider $G$ to be a connected bipartite graph and $I$ its edge ideal. We show $I^k$ has maximal depth for $k\gg 0$ if and only if $G$ is a star graph.

\section{Preliminaries}

Let $K$ be a field and $(R,\mm)$ be a Noetherian local ring, or a standard graded $K$-algebra with graded
maximal ideal $\mm$.
It is a classical fact that if $M\neq 0$ is an $R$-module, then
\[
\depth M\leq \min\{\dim R/\pp: \pp \in \Ass(M)\},
\]
see \cite{BH}. We set $\mdepth_R M=\min\{\dim R/\pp: \pp \in \Ass(M)\}$. For simplicity, we write $\mdepth M$ instead of $\mdepth_RM$.
Thus $\depth M\leq \mdepth M\leq \dim M$. Observe that $\depth(M)=0$ if and only if $\mdepth(M)=0$. Thus, if $\mdepth M=1$, then $\depth M=1$.
\begin{Definition}{\em
We say $M$ has {\em maximal depth} if the equality holds, i.e.,
\[
\depth M =\mdepth M.
\]
In other words, there is an associated prime $\pp$ of $M$  such that $\depth M=\dim R/\pp$.}
\end{Definition}
Some examples of modules with maximal depth property are as follows:
 \begin{itemize}
\item Cohen--Macaulay modules have maximal depth because $\depth M=\dim R/\pp$ for every associated prime of $M$, see \cite[Proposition 2.3.13]{V}.
\item Sequentially Cohen-Macaulay modules have maximal depth, see \cite[Proposition 1.4]{R}, see also \cite[Theorem 6.4.23]{V} where the ring $R$ is a polynomial ring.
\item If $M$ is unmixed, then $M$ has maximal depth if and only if $M$ is Cohen--Macaulay.
\end{itemize}

Let $I\subset S=K[x_1,\ldots,x_n]$ be a squarefree monomial ideal. Then $I=\Sect_{j=1}^m \pp_{j}$ where each of the $\pp_j$  is a monomial prime ideal of $I$.  The ideal $I^\vee$ which is minimally generated by the monomials $u_{j}=\prod_{x_i\in \pp_j}x_i$ is called the {\em Alexander dual} of $I$. As usual we denote by $\reg M$ the regularity
of a finitely generated graded $S$-module $M$. We quote the following facts which for example can be found in \cite{HH}.

\begin{Theorem}
\label{Terai}
(Terai) $\reg I^\vee=\pd S/I$.
\end{Theorem}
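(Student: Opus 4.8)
The plan is to translate the statement into the language of simplicial complexes and prove it with Hochster's formula together with combinatorial Alexander duality. Write $I = I_\Delta$ for the Stanley--Reisner ideal of a simplicial complex $\Delta$ on the vertex set $[n]$; then $I^\vee = I_{\Delta^\vee}$, where $\Delta^\vee = \{\sigma \subseteq [n] : [n]\setminus\sigma \notin \Delta\}$ is the Alexander dual complex. Since $\reg I^\vee = \reg S/I^\vee + 1$ (the two resolutions differ by a single homological shift) and, by Auslander--Buchsbaum, $\pd S/I = n - \depth S/I$, it suffices to prove
\[
\reg S/I_{\Delta^\vee} = n - \depth S/I_\Delta - 1.
\]

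First I would compute the left-hand side. By Hochster's formula,
\[
\beta_{k,W}(S/I_{\Delta^\vee}) = \dim_K \tilde H_{|W|-k-1}\big((\Delta^\vee)_W; K\big)
\]
for $W \subseteq [n]$, and $\reg S/I_{\Delta^\vee} = \max\{|W|-k : \beta_{k,W}(S/I_{\Delta^\vee}) \neq 0\}$. The key combinatorial identity is that the restriction of the dual is the dual of a link: one checks directly that $(\Delta^\vee)_W = (\link_\Delta([n]\setminus W))^\vee$, the Alexander dual formed on the vertex set $W$. Applying combinatorial Alexander duality $\tilde H_i(\Gamma^\vee; K) \cong \tilde H^{|V|-i-3}(\Gamma; K)$ on $V = W$ then converts the above into
\[
\beta_{k,W}(S/I_{\Delta^\vee}) = \dim_K \tilde H^{k-2}\big(\link_\Delta([n]\setminus W); K\big).
\]

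Next I would bring in the local-cohomology form of Hochster's formula, which says that $H^i_{\mm}(S/I_\Delta) \neq 0$ exactly when $\tilde H^{i-|\sigma|-1}(\link_\Delta \sigma; K) \neq 0$ for some face $\sigma \in \Delta$, so that $\depth S/I_\Delta = \min\{i : H^i_\mm(S/I_\Delta)\neq 0\}$ is the least such $i$. Setting $\sigma = [n]\setminus W$ and matching the two homology conditions through $i - |\sigma| - 1 = k - 2$, a short reindexing gives $|W| - k = n - i - 1$. Hence the maximum over nonvanishing Betti numbers defining $\reg S/I_{\Delta^\vee}$ is attained by minimizing $i$, i.e.
\[
\reg S/I_{\Delta^\vee} = n - \min\{i : H^i_\mm(S/I_\Delta)\neq 0\} - 1 = n - \depth S/I_\Delta - 1,
\]
which is exactly what was needed; adding $1$ yields $\reg I^\vee = n - \depth S/I = \pd S/I$.

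I expect the main obstacle to be the precise index bookkeeping: aligning the homological form of Hochster's formula (indexed by the restrictions $(\Delta^\vee)_W$) with its local-cohomology form (indexed by links of $\Delta$) through the Alexander duality isomorphism, and doing so with the correct conventions for reduced (co)homology. Two degenerate points need care --- the void link that occurs when $\sigma \notin \Delta$ (contributing no homology, hence no Betti number) and the $(-1)$-dimensional reduced homology of the irrelevant complex $\{\emptyset\}$ --- but once these are handled the equality of the two extremal indices is forced. Since we work over a field, $\dim_K \tilde H_j = \dim_K \tilde H^j$ throughout, so no orientation or characteristic issues arise.
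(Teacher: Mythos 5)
The paper does not prove this statement at all: it is Terai's theorem, quoted as a known fact with a citation to Herzog--Hibi \cite{HH}. Your argument is a correct rendition of the standard proof found in that reference (and in Terai's original work): reduce via $\reg I^\vee = \reg S/I^\vee + 1$ and Auslander--Buchsbaum, compute $\beta_{k,W}(S/I_{\Delta^\vee})$ by Hochster's formula, use the identity $(\Delta^\vee)_W = (\link_\Delta([n]\setminus W))^\vee$ together with combinatorial Alexander duality $\tilde H_i(\Gamma^\vee;K)\iso \tilde H^{|W|-i-3}(\Gamma;K)$, and match against the local-cohomology form of Hochster's formula; the index bookkeeping $|W|-k = n-i-1$ checks out, and you correctly flag the only delicate points (non-faces giving void links, and the $W=\emptyset$ contribution, which only produces the harmless value $0$ and cannot affect the maximum since $n-\depth S/I_\Delta-1\geq 0$ for a proper nonzero ideal). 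So the proposal is correct and takes essentially the approach of the paper's cited source.
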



\begin{Theorem}
\label{Aus}
(Auslander-Buchsbam formula) Let $M$ be a finitely generated $R$-module with $\pd M<\infty$. Then
\[
\pd M+\depth M=\depth R.
\]
\end{Theorem}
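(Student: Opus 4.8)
The plan is to prove this classical formula by induction on $\depth R$, using throughout Rees's characterization $\depth N=\inf\{i:\Ext^i_R(k,N)\neq 0\}$ for the residue field $k=R/\mm$, together with the \emph{depth lemma}: for a short exact sequence $0\to A\to B\to C\to 0$ one has $\depth A\geq\min\{\depth B,\depth C+1\}$ and $\depth C\geq\min\{\depth A-1,\depth B\}$. Two reduction moves drive the induction. When $M$ has positive depth I would cut down by a nonzerodivisor, lowering $\depth R$; when $M$ has depth zero I would pass to a first syzygy to lower the projective dimension.

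For the base case $\depth R=0$ I would show that $\pd M<\infty$ forces $M$ to be free, so that both sides of the formula vanish. Choosing a minimal free resolution $0\to F_p\to\cdots\to F_0\to M\to 0$, all differentials have entries in $\mm$. Since $\depth R=0$ there is a nonzero socle element $x\in\Soc(R)$, i.e.\ $x\mm=0$; if $p\geq 1$, then for a basis vector $e$ of $F_p$ the element $xe$ is nonzero but maps to $0$ under the last differential, whose entries lie in $\mm$, contradicting injectivity. Hence $p=0$ and $M$ is free, giving $\pd M+\depth M=0=\depth R$.

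For the inductive step, assume $\depth R=d>0$. If $\depth M>0$, then $\mm$ is contained in no associated prime of $R$ or of $M$, so prime avoidance yields an $x\in\mm$ that is a nonzerodivisor on both. Passing to $\bar R=R/xR$ and $\bar M=M/xM$, the length-one Koszul resolution of $\bar R$ shows $\Tor^R_i(M,\bar R)=0$ for $i>0$, so tensoring a minimal $R$-free resolution of $M$ with $\bar R$ resolves $\bar M$; thus $\pd_{\bar R}\bar M=\pd_R M$, while $\depth\bar R=d-1$ and $\depth\bar M=\depth M-1$. The induction hypothesis over $\bar R$ then yields the identity after adding $1$ to each side. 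If instead $\depth M=0$, I would choose $0\to K\to F\to M\to 0$ with $F$ free and $K$ the first syzygy; minimality gives $\pd K=\pd M-1$ (here $M$ is not free, since $\depth M=0<d$). The depth lemma, applied in both directions, pins down $\depth K=1$: the first inequality gives $\depth K\geq\min\{d,1\}=1$, and the second forces $\depth K\leq 1$ because $\depth M=0$ while $\depth F=d\geq 1$. Now $K$ has positive depth, so the previous case applies to $K$ and gives $\pd K+\depth K=d$; substituting $\depth K=1$ and $\pd K=\pd M-1$ produces $\pd M=d$, whence $\pd M+\depth M=d+0=\depth R$.

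The main obstacle is the depth-zero case for $M$: the two reduction moves do not interleave automatically, and one must verify that replacing $M$ by a syzygy raises its depth by exactly one (here from $0$ to $1$) so that the positive-depth case can be reinvoked over the same ring. Making the bookkeeping of depths precise, rather than settling for the inequalities the depth lemma supplies directly, is where the argument demands the most care; the socle computation of the base case and the $\Tor$-vanishing that preserves projective dimension under reduction modulo a regular element are the supporting technical points.
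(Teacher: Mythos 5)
The paper does not prove this statement at all: it is the classical Auslander--Buchsbaum formula, quoted as a known fact from \cite{HH} (see also \cite{BH}), so the only question is whether your argument stands on its own, and it does. Your base case is the standard socle argument: if $\depth R=0$, a nonzero $x$ with $x\mm=0$ annihilates the entries of the last differential of a minimal free resolution, so injectivity of that differential forces the resolution to have length $0$, making $M$ free and both sides of the formula zero. Your positive-depth step is also sound: since $\depth R>0$ and $\depth M>0$, the maximal ideal lies in no prime of the finite set $\Ass(R)\cup\Ass(M)$, prime avoidance yields a common nonzerodivisor $x$, the vanishing $\Tor_i^R(M,R/xR)=0$ for $i>0$ shows that reducing a minimal resolution modulo $x$ gives a minimal resolution of $M/xM$ over $R/xR$, and the induction hypothesis over $R/xR$ (which has depth $d-1$) gives the identity. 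In the depth-zero case your bookkeeping is correct: for the syzygy sequence $0\to K\to F\to M\to 0$ with $M$ not free, your two inequalities pin down $\depth K=1$; in fact this is exactly case (c) of the paper's own Depth Lemma (Lemma \ref{Depth lemma}), since $\depth F=d>0=\depth M$ gives $\depth K=\depth M+1=1$ in one step, so you could shorten this part by citing it. Two harmless caveats: the statement and your proof presuppose $M\neq 0$ (the convention $\depth 0=\infty$ breaks the formula otherwise), and in the paper's alternative setting of a standard graded $K$-algebra the prime avoidance step must be replaced by homogeneous prime avoidance, where the nonzerodivisor may have degree greater than one; after that the argument goes through verbatim.
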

The {\em big height} of an ideal $J\subset S$, denoted by $\bight J$, is the maximum height of the minimal primes of $J$.
The following simple fact motivates us to work on squarefree monomial ideals with maximal depth. We say $I\subset S$ has maximal depth if $S/I$ has maximal depth.
\begin{Proposition}
\label{Teri}
Let $I\subset S$ be a squarefree monomial ideal. Then, $I$ has maximal depth if and only if $\reg I^\vee$ is the maximum degree of the generators of $I^\vee$.
\end{Proposition}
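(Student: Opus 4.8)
The plan is to express both quantities appearing in the statement as $n$ minus something and then match them up. The crucial structural input is that a squarefree monomial ideal is radical, so $S/I$ is reduced and every associated prime is minimal. Hence $\Ass(S/I)=\{\pp_1,\dots,\pp_m\}$, exactly the primes appearing in the intersection $I=\Sect_{j=1}^m\pp_j$, and therefore
\[
\mdepth S/I=\min_{1\le j\le m}\dim S/\pp_j.
\]
This is the step I would carry out first, since it is what makes $\mdepth$ concretely computable in terms of the $\pp_j$; for a general module one only has $\Min\subseteq\Ass$, and the minimum over associated primes need not be expressible through heights of minimal primes alone.

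Next I would translate the right-hand side of the asserted equivalence. Each minimal generator $u_j=\prod_{x_i\in\pp_j}x_i$ of $I^\vee$ has degree equal to the number of variables generating $\pp_j$, that is $\deg u_j=\height\pp_j$. Since $\dim S/\pp_j=n-\height\pp_j$, the maximum degree of a generator of $I^\vee$ equals
\[
\max_{1\le j\le m}\height\pp_j=n-\min_{1\le j\le m}\dim S/\pp_j=n-\mdepth S/I.
\]

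For the left-hand side I would invoke the two quoted theorems. Terai's theorem (Theorem~\ref{Terai}) gives $\reg I^\vee=\pd S/I$, and the Auslander--Buchsbaum formula (Theorem~\ref{Aus}) applied to the polynomial ring $S$, for which $\depth S=n$, gives $\pd S/I=n-\depth S/I$. Combining these yields
\[
\reg I^\vee=n-\depth S/I.
\]

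Finally I would assemble the equivalence by comparing the last two displays: $\reg I^\vee$ equals the maximum degree of a generator of $I^\vee$ if and only if $n-\depth S/I=n-\mdepth S/I$, i.e. if and only if $\depth S/I=\mdepth S/I$, which is precisely the definition of $I$ having maximal depth. I do not anticipate a genuine obstacle here; the only point demanding care is the reducedness reduction $\Ass(S/I)=\Min(S/I)$ in the first step, on which the clean formula for $\mdepth S/I$ rests. Everything after that is bookkeeping that converts the two cited theorems into the single numerical comparison above.
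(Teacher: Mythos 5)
Your proposal is correct and follows essentially the same route as the paper: Terai's theorem plus Auslander--Buchsbaum to get $\reg I^\vee = n - \depth S/I$, and the identification $\Ass(S/I)=\Min(S/I)$ for the radical ideal $I$ to express the maximum generator degree of $I^\vee$ (the paper's $\bight I$) as $n-\mdepth S/I$. The only difference is presentational---you run the argument as a single two-sided equality comparison, whereas the paper writes out the two implications separately---so there is nothing substantive to distinguish the approaches.
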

\begin{proof}

Suppose $I$ has maximal depth. Hence
\begin{eqnarray*}
\reg I^\vee&=&\pd S/I\\
           &=&n-\depth S/I\\
           &=&n-\mdepth S/I\\
           &=& \bight I.
\end{eqnarray*}
Theorem \ref{Terai}  explains the first step in this sequence. Theorem \ref{Aus} provides the second step. Our assumption implies the third step. The forth step follows from that fact that when $I$ is squarefree, the associated primes are the same as minimal primes containing $I$.  Notice that the $\bight I$ is the maximum degree of the generators of $I^\vee$. Therefore, the conclusion follows.
Conversely, suppose $\reg I^\vee$ is the maximum degree of the generators of $I^\vee$. By the same reasons as above, we have
\[
\depth S/I=n-\pd S/I
          =n-\reg I^\vee
          =n-\bight I
          =\mdepth S/I,
          \]
as desired.
\end{proof}
We recall the following fact from \cite[Lemma 2.3.8]{V}.
\begin{Lemma}
\label{Depth lemma}(Depth Lemma) If  $0\to N \to M \to L \to 0$ is a short exact sequence of $R$-modules, then
 \begin{itemize}
\item[{(a)}] If $\depth(M)<\depth(L)$, then $\depth(N)=\depth(M)$.
\item[{(b)}] If $\depth(M)=\depth(L)$, then $\depth(N)\geq \depth(M)$.
\item[{(c)}] If $\depth(M)>\depth(L)$, then $\depth(N)=\depth(L)+1$.
\end{itemize}
\end{Lemma}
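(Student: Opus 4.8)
The plan is to reduce the statement to the behaviour of local cohomology under the short exact sequence, using the standard characterization
\[
\depth X=\min\{i:H^i_{\mm}(X)\neq 0\}
\]
for a finitely generated module $X$ over the Noetherian local (or standard graded) ring $(R,\mm)$, with the convention $\depth 0=\infty$ (equivalently one may use $\Ext^i_R(R/\mm,-)$ in place of $H^i_\mm(-)$). Applying the functors $H^i_{\mm}(-)$ to $0\to N\to M\to L\to 0$ produces the long exact sequence
\[
\cdots\to H^{i-1}_{\mm}(L)\to H^i_{\mm}(N)\to H^i_{\mm}(M)\to H^i_{\mm}(L)\to H^{i+1}_{\mm}(N)\to\cdots,
\]
and each of the three assertions becomes a bookkeeping argument about the first index at which these modules are nonzero. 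I set $b=\depth M$ and $c=\depth L$ throughout.

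For part (b), where $b=c$, I would first check vanishing below $b$: for $i<b$ both $H^i_{\mm}(M)$ and $H^{i-1}_{\mm}(L)$ vanish (the latter since $i-1<b-1<c$), so the segment $H^{i-1}_{\mm}(L)\to H^i_{\mm}(N)\to H^i_{\mm}(M)$ forces $H^i_{\mm}(N)=0$; hence $\depth N\geq b$. Part (a), where $b<c$, refines this: the same vanishing gives $\depth N\geq b$, and at $i=b$ the terms $H^{b-1}_{\mm}(L)$ and $H^b_{\mm}(L)$ both vanish (as $b-1<c$ and $b<c$), so the exact sequence yields an isomorphism $H^b_{\mm}(N)\cong H^b_{\mm}(M)\neq 0$, giving $\depth N=b$.

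Part (c), where $b>c$, is the delicate one. The vanishing bookkeeping now shows $H^i_{\mm}(N)=0$ for all $i\leq c$: for $i<c$ both neighbouring terms vanish, and at $i=c$ one has $H^{c-1}_{\mm}(L)=0$ together with $H^c_{\mm}(M)=0$ (since $c<b$), forcing $H^c_{\mm}(N)=0$; thus $\depth N\geq c+1$. To obtain equality I would examine the connecting map in the segment
\[
H^c_{\mm}(M)\to H^c_{\mm}(L)\xrightarrow{\ \delta\ }H^{c+1}_{\mm}(N)\to H^{c+1}_{\mm}(M).
\]
Here $H^c_{\mm}(M)=0$ because $c<b$, so $\delta$ is injective, and since $H^c_{\mm}(L)\neq 0$ one concludes $H^{c+1}_{\mm}(N)\neq 0$, whence $\depth N=c+1$.

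The main obstacle is precisely this last step: unlike (a) and (b), which only require vanishing of cohomology modules, part (c) must \emph{produce} a nonzero class in $H^{c+1}_{\mm}(N)$, and the only available mechanism is the connecting homomorphism $\delta$. The key point that makes it work uniformly is that $H^c_{\mm}(M)=0$ even in the boundary case $b=c+1$ (where $H^{c+1}_{\mm}(M)$ itself is nonzero), so the injectivity of $\delta$ depends only on $c<b$ and holds across all $b>c$. One should finally confirm that the depth--local cohomology characterization and the convention $\depth 0=\infty$ reconcile the degenerate cases $N=0$ or $L=0$ with the three dichotomies.
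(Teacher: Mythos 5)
Your proof is correct, but there is nothing in the paper to compare it against: the paper does not prove this lemma, it simply quotes it as \cite[Lemma 2.3.8]{V}, so the ``paper's proof'' is a citation. Your argument is the standard derivation: characterize $\depth X$ as the least $i$ with $H^i_{\mm}(X)\neq 0$ (for $X$ finitely generated, with $\depth 0=\infty$), apply the long exact sequence of local cohomology to $0\to N\to M\to L\to 0$, and do the index bookkeeping. The three cases are handled completely and correctly: the vanishing argument gives $\depth N\geq b$ in (a) and (b); the additional vanishing of $H^{b-1}_{\mm}(L)$ and $H^b_{\mm}(L)$ when $b<c$ gives the isomorphism $H^b_{\mm}(N)\cong H^b_{\mm}(M)\neq 0$ in (a); and in (c) you correctly identify the one step that needs more than vanishing, namely producing a nonzero class in $H^{c+1}_{\mm}(N)$, which you get from injectivity of the connecting map out of $H^c_{\mm}(L)\neq 0$, valid because $H^c_{\mm}(M)=0$ whenever $c<b$, including the boundary case $b=c+1$. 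This route is interchangeable with the one in the cited source, which runs the same bookkeeping on the long exact sequence of $\Ext^i_R(R/\mm,-)$ via Rees' characterization of depth; neither argument is more general than the other, and both require the finiteness hypotheses that the paper's standing assumptions (finitely generated modules over a Noetherian local or standard graded ring) supply. Your closing remarks about the degenerate cases $N=0$ and $L=0$ being absorbed by the convention $\depth 0=\infty$ are also the right way to make the trichotomy exhaustive.
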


\section{Line and Cycle graphs}

Let $G$ be a graph.  The vertex set of $G$ will be denoted by $V(G)$ and will be the set $[n]=\{1,2,\ldots,n\}$. We denote the set of edges of $G$ by $E(G)$. We consider the {\em edge ideal} $I(G)$  which is generated by all monomials $x_ix_j$ with $\{i,j\}\in E(G)$. A subset $C\subset [n]$ is called a {\em vertex cover} of $G$  if $C\sect\{i,j\}\neq \emptyset$ for all edges $\{i,j\}$ of $G$.
A vertex cover $C$ is called {\em minimal} if $C$ is a
vertex of $G$, and no proper subset of $C$ is a vertex cover of $G$. A minimal vertex cover of $G$ is
called {\em maximum} if it has maximum cardinality among the minimal vertex covers of $G$. Thus $\bight I(G)$ is the cardinality of the maximum minimal vertex covers of $G$.

It is well known that the minimal vertex covers of $G$ are the sets of generators of the minimal primes of $I(G)$. In fact, a subset $C=\{ i_1, \dots, i_r\}\subset [n]$  is a minimal vertex cover of $G$ if and only if $\pp_C=(x_{i_1}, \dots, x_{i_r})$ is a
minimal prime ideal of $I(G)$, see \cite[Lemma 9.1.4]{HH}.

The graph  $G$ is called {\em disconnected} if $V(G)$ is the disjoint union of $W_1$ and $W_2$ and there is no edge $\{i,j\}$ of $G$ with $i\in W_1$ and $j\in W_2$. The graph $G$ is called {\em connected} if it is not disconnected.  A graph which has no cycle and which is connected is called a {\em tree}.

For $n\geq 2$, we let $L_n$ denote the line graph on $n$ vertices. This is the graph with vertices $[n]$ and edges $\{j, j + 1\}$ for all $j = 1, \dots, n-1$. Hence, its edge ideal is $I(L_n)=(x_1x_2, x_2x_3, \dots, x_{n-1}x_n)$ in a polynomial ring with $n$ variables.
In the following, we explicitly compute the depth of the line graph $L_n$. However, this is a known fact, see \cite[Corollary 7.7.35]{SJ} but here we prove it in a different way.
\medskip
\;\;\;

\noindent\textbf{Notation}:
 For any graph $G$, we write $\depth G$ for the depth of $S/I(G)$.

\begin{Proposition}
\label{line}
The depth of the line graph $L_n$ is independent
of the characteristic of the chosen field and is
\[
\depth L_n= \left\{
\begin{array}{ccc}
\frac{n}{3} &&\text{if \; $n\equiv 0(\mod 3)$},\\[1ex]
\frac{n+2}{3} && \text{if  \; $n\equiv 1(\mod 3)$,}\\[1ex]
\frac{n+1}{3} && \text{if  \; $n\equiv 2(\mod 3)$.}
\end{array}
\right.
 \]
\end{Proposition}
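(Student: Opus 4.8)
The plan is to reduce the computation of $\depth L_n$ to a purely combinatorial invariant by exploiting maximal depth. First I would observe that the path $L_n$ is a tree, hence a forest and in particular a chordal graph, so that its edge ideal is sequentially Cohen--Macaulay over any field (Francisco--Van Tuyl). By the second item in the list following the Definition, sequentially Cohen--Macaulay modules have maximal depth; therefore $S/I(L_n)$ has maximal depth and $\depth L_n=\mdepth S/I(L_n)$. Since this identification uses only the squarefree combinatorial structure, it holds independently of $\chara K$, which already accounts for the asserted characteristic independence.

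Next I would compute $\mdepth S/I(L_n)$. Because $I(L_n)$ is squarefree, $\Ass(S/I(L_n))=\Min(S/I(L_n))$, and each minimal prime has the form $\pp_C$ for a minimal vertex cover $C$ of $L_n$, with $\dim S/\pp_C=n-|C|$. Hence
\[
\mdepth S/I(L_n)=\min_{\pp\in\Min}\dim S/\pp=n-\bight I(L_n),
\]
and $\bight I(L_n)$ equals the maximum cardinality of a minimal vertex cover of $L_n$. Passing to complements, minimal vertex covers correspond to maximal independent sets, so $\bight I(L_n)=n-i(L_n)$, where $i(L_n)$ denotes the minimum cardinality of a maximal independent set (the independent domination number of the path). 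Combining the two identities gives $\depth L_n=n-\bight I(L_n)=i(L_n)$.

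The remaining and most substantive step is the combinatorial evaluation $i(L_n)=\lceil n/3\rceil$. I would prove this by induction on $n$ with base cases $n=2,3$: given a minimum maximal independent set, the left endpoint forces either vertex $1$ or vertex $2$ into the set, and choosing the vertex that dominates the first block $\{1,2,3\}$ lets one strip off three consecutive vertices and pass to the path on $\{4,\dots,n\}$, yielding $i(L_n)\le i(L_{n-3})+1$; a matching lower bound follows since any maximal independent set must place at least one vertex in each consecutive block of three in order to dominate it. Unwinding the recursion and distinguishing the three residues of $n$ modulo $3$ then gives $i(L_n)=\lceil n/3\rceil$, that is $n/3$, $(n+2)/3$, $(n+1)/3$ respectively, which is exactly the claimed formula.

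The main obstacle is this last combinatorial lemma: one must check carefully that the stripped set restricts to a genuine maximal independent set of the shorter path, so that the induction hypothesis applies, and that the lower bound is tight in each residue class. Everything preceding it is a formal consequence of maximal depth together with the squarefree identification $\Ass=\Min$, so once the value of $i(L_n)$ is pinned down the proposition follows immediately.
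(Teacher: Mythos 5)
Your algebraic reduction is exactly the paper's: the paper also begins with ``trees are sequentially Cohen--Macaulay, hence have maximal depth,'' and then computes $\depth L_n=\mdepth L_n=n-\bight I(L_n)$ combinatorially; your passage to complements, replacing maximum minimal vertex covers by minimum maximal independent sets, is just a reformulation of that computation. The genuine problem is in your evaluation of $i(L_n)$. Your lower bound argument --- ``any maximal independent set must place at least one vertex in each consecutive block of three'' --- falls one vertex short precisely in the case $n\equiv 1(\mod 3)$, which is the delicate case where the answer exceeds $n/3$ by the most. The disjoint blocks of three number only $(n-1)/3$, so the argument gives $i(L_n)\geq (n-1)/3$, while the proposition requires $i(L_n)=(n+2)/3$. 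Concretely, for $n=4$ the partition is $\{1,2,3\}\cup\{4\}$, and the set $\{3\}$ hits the full block and also dominates the leftover vertex $4$, so your counting does not exclude a maximal independent set of size $1$; yet $i(L_4)=2$, since $\{3\}$ is not maximal (vertex $1$ is undominated). A uniform fix: a maximal independent set $D$ is a dominating set, and every vertex of a path dominates at most $3$ vertices (itself and at most two neighbours), so $3|D|\geq n$ and hence $|D|\geq \lceil n/3\rceil$ in every residue class.

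Your recursion is also run in the wrong direction. Stripping a minimum maximal independent set of $L_n$ down to $\{4,\dots,n\}$ --- when it works at all --- yields $i(L_{n-3})\leq i(L_n)-1$, which is the lower-bound direction, not the claimed $i(L_n)\leq i(L_{n-3})+1$; and it does not always work, since for $D=\{1,3\}$ in $L_4$ the restriction to $\{4\}$ is empty and hence not maximal (the obstacle you flag yourself but do not resolve). The upper bound you need follows instead from the extension direction: take a minimum maximal independent set $D'$ of the path on $\{4,\dots,n\}$ and adjoin the vertex $2$; the result is independent (the neighbours $1,3$ of $2$ lie outside $D'$) and maximal ($1$ and $3$ are dominated by $2$, and $4$, if not in $D'$, is dominated by $5\in D'$ by maximality of $D'$ in the tail). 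With that repair and the corrected lower bound, your induction closes, and the remainder of your argument (maximal depth together with $\Ass=\Min$ for squarefree ideals) is sound and coincides with the paper's reasoning.
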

\begin{proof}
Notice that the line graph is a tree.  Trees are sequentially Cohen-Macaulay, see \cite{F}. As sequentially Cohen-Macaulay modules have maximal depth, all trees have maximal depth. In particular, $L_n$ has maximal depth for all $n$.  Let $I=I(L_n)$ be the edge ideal of $L_n$ in a polynomial ring $S$ with $n$ variables. We consider the following cases:

Case 1: $n\equiv 0(\mod 3)$.  We claim that the set
 \[
C=\{1, 3, 4, 6,  7, 9, 10, \dots, n-3, n-2, n\}
 \]
  is a maximum minimal vertex cover of $L_n$. A minimal vertex cover of $L_n$ cannot contain $3$ consecutive vertices because of minimality. This implies that if we divide the vertices of $L_n$ into blocks of $3$ vertices, then each block can have at most $2$ vertices in the cover. Therefore the cardinality of a minimal vertex cover can be at most $2n/3$.  Thus
  \[
  \pp_C=(x_1, x_3, x_4, x_6,  x_7, x_9, x_{10},\dots, x_{n-3}, x_{n-2}, x_n)
  \]
  is a minimal prime ideal of $I$ with maximum height and so $\bight I= 2n/3$. It follows that $\mdepth L_n=n-2n/3=n/3$ and hence $\depth L_n=n/3$.

  Case 2:  $n\equiv 1(\mod 3)$.  Hence $n-1\equiv 0(\mod 3)$. We claim that the set
   \[
C=\{ 2, 3, 5, 6, 8, 9,\dots, n-2, n-1\}
 \]
is a maximum minimal vertex cover of $L_n$. In fact, the vertices of $L_n$ can be divided into blocks with $3$ vertices as well as one block with only $1$ vertex. Then each block of $3$ vertices can have at most $2$ vertices in the cover. The vertex in the block with one vertex need not to be in the cover. Therefore the cardinality of a minimal vertex cover can be at most $2(n-1)/3$. Hence
\[
\pp_{C}=(x_2, x_3, x_5, x_6, x_8, x_9,\dots, x_{n-2}, x_{n-1})
 \]
 is a minimal prime ideal of $I$ with maximum height and so $\bight I= 2(n-1)/3$. Consequently, $\mdepth L_n=n-2(n-1)/3=(n+2)/3$ and so $\depth L_n=(n+2)/3$.

  Case 3:  $n\equiv 2(\mod 3)$.  Hence, $n-2\equiv 0(\mod 3)$.  The set
\[
C=\{ 2, 3, 5, 6, 8, 9,\dots, n-3, n-2, n \}
 \]
is a maximum minimal vertex cover of $L_n$. Indeed, the vertices of $L_n$ can be divided into blocks with $3$ vertices as well as only one block with $2$ vertex. Then each block of $3$ vertices can have at most $2$ vertices and the block of $2$ vertices can have at most $1$ vertex in the cover.  Therefore the cardinality of a minimal vertex cover can be at most $2(n-2)/3+1=(2n-1)/3$. Hence
\[
\pp_{C}=(x_2, x_3, x_5, x_6, x_8, x_9,\dots, x_{n-3}, x_{n-2}, x_n)
\]
is a minimal prime ideal of $I$ with maximum height and so $\bight I= 2(n-1)/3$. Consequently,  $\mdepth L_n=n-(2n-1)/3=(n+1)/3$ and so $ \depth L_n=(n+1)/3.$ We remark that the proof of proposition does not depend on the characteristic of the field $K$.
\end{proof}

Let $C_n$ be a cycle graph of length $n$.
 We recall the following result from \cite[Corollary 7.6.30]{SJ}.

\begin{Fact}{\em
\label{cycle1}
The depth of the cycle graph is independent
of the characteristic of the chosen field and is
\[
\depth C_n= \left\{
\begin{array}{ccc}
\frac{n}{3} &&\text{if \; $n\equiv 0(\mod 3)$},\\[1ex]
\frac{n-1}{3} && \text{if  \; $n\equiv 1(\mod 3)$,}\\[1ex]
\frac{n+1}{3} && \text{if  \; $n\equiv 2(\mod 3)$.}
\end{array}
\right.
 \]}
 \end{Fact}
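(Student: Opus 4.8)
The plan is to reduce the cycle to the line graphs already treated in Proposition~\ref{line} by peeling off a single vertex. Write $S=K[x_1,\dots,x_n]$ and $I=I(C_n)=(x_1x_2,x_2x_3,\dots,x_{n-1}x_n,x_nx_1)$, and consider the short exact sequence
\[
0\longrightarrow S/(I:x_n)\longrightarrow S/I\longrightarrow S/(I,x_n)\longrightarrow 0 .
\]
First I would identify the two outer modules. Setting $x_n=0$ kills the two edges through $x_n$ and leaves the path on $x_1,\dots,x_{n-1}$, so $S/(I,x_n)\cong K[x_1,\dots,x_{n-1}]/I(L_{n-1})$ and hence $\depth S/(I,x_n)=\depth L_{n-1}$. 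On the other hand $I:x_n=(x_1,x_{n-1})+I(L_{n-3})$, where $L_{n-3}$ is the path on $x_2,\dots,x_{n-2}$; since $x_n$ survives as a free variable while $x_1,x_{n-1}$ are killed, $S/(I:x_n)\cong\bigl(K[x_2,\dots,x_{n-2}]/I(L_{n-3})\bigr)[x_n]$, and therefore $\depth S/(I:x_n)=\depth L_{n-3}+1$.

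Next I would substitute the values from Proposition~\ref{line}. A direct check in each residue class modulo $3$ gives the two outer depths: for $n\equiv 0$ both equal $n/3$; for $n\equiv 2$ both equal $(n+1)/3$; and for $n\equiv 1$ one gets $\depth S/(I:x_n)=(n+2)/3$ while $\depth S/(I,x_n)=(n-1)/3$. Applying the Depth Lemma~\ref{Depth lemma} to the sequence yields in every case the lower bound $\depth C_n\ge\min\{\depth L_{n-3}+1,\ \depth L_{n-1}\}$, and this minimum is exactly the claimed value of $\depth C_n$ in all three cases.

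It then remains to match this lower bound with an upper bound. For $n\equiv 0$ and $n\equiv 2$ this is immediate: a maximum minimal vertex cover of $C_n$ has $n-\lceil n/3\rceil$ elements (its complement is a smallest maximal independent set, i.e.\ a minimum independent dominating set, of size $\lceil n/3\rceil$), so $\bight I=n-\lceil n/3\rceil$ and hence $\depth C_n\le \mdepth C_n=\lceil n/3\rceil$, which equals $n/3$ and $(n+1)/3$ respectively and closes these two cases.

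The hard part is $n\equiv 1\pmod 3$. Here the two outer terms of the sequence have depths differing by exactly one, so the Depth Lemma is inconclusive about the middle term, and the crude bound $\depth C_n\le\mdepth C_n=(n+2)/3$ is too weak—indeed this is precisely the residue class in which $C_n$ fails to have maximal depth. To force $\depth C_n=(n-1)/3$ I would analyse the connecting homomorphism
\[
H^{(n-1)/3}_\mm\bigl(S/(I,x_n)\bigr)\longrightarrow H^{(n+2)/3}_\mm\bigl(S/(I:x_n)\bigr)
\]
and show it fails to be injective, so that the local cohomology of $S/I$ does not vanish in degree $(n-1)/3$; equivalently, I would compute $\pd S/I$ independently—via Theorem~\ref{Terai}, $\pd S/I=\reg I^\vee$, where $I^\vee$ is the cover ideal of $C_n$, whose regularity can be determined directly—and then read off the depth from the Auslander--Buchsbaum formula (Theorem~\ref{Aus}). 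This non-injectivity, equivalently the extra $+1$ in the projective dimension when $n\equiv 1$, is the one genuinely delicate point of the argument.
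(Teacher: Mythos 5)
Your reduction via the exact sequence $0\to S/(I:x_n)\to S/I\to S/(I,x_n)\to 0$ is sound: the identifications $S/(I,x_n)\iso K[x_1,\dots,x_{n-1}]/I(L_{n-1})$ and $S/(I:x_n)\iso \bigl(K[x_2,\dots,x_{n-2}]/I(L_{n-3})\bigr)[x_n]$ are correct, as are the resulting depth values read off from Proposition~\ref{line}. (Note that the paper itself gives no proof of this Fact; it is quoted from Jacques' thesis \cite{SJ}, so your attempt must stand on its own.) For $n\equiv 0$ and $n\equiv 2 \pmod 3$ your argument is complete: the Depth Lemma gives the lower bound, and $\depth C_n\leq \mdepth C_n = n-\bight I(C_n)$ closes these cases, the required big height being witnessed by an explicit maximum minimal vertex cover exactly as in the paper's Proposition~\ref{cycle2}.

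For $n\equiv 1\pmod 3$, however, there is a genuine gap: the upper bound $\depth C_n\le (n-1)/3$ is never proved. You correctly diagnose the difficulty --- the outer depths differ by exactly one, so the Depth Lemma is inconclusive, and since $\depth C_n<\mdepth C_n$ in this residue class no argument through associated primes or big height can ever close it --- but the two remedies you offer are only named, not executed. You do not show that the connecting homomorphism $H^{(n-1)/3}_\mm\bigl(S/(I,x_n)\bigr)\to H^{(n+2)/3}_\mm\bigl(S/(I:x_n)\bigr)$ fails to be injective, and you do not compute $\reg I(C_n)^\vee$; the regularity of the cover ideal of a cycle is itself a nontrivial computation, essentially equivalent in difficulty to the statement being proved, so invoking it ``directly'' begs the question. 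This missing step is precisely the hard content of the Fact --- it is why the paper cites Jacques' Betti-number computations rather than giving a short self-contained argument --- and as written your proposal establishes only two of the three cases.
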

In the following, we classify all cycle graphs which have maximal depth.
\begin{Proposition}
\label{cycle2}
The cycle graph $C_n$ has maximal depth if and only if $n\equiv 0(\mod 3)$ or $n\equiv 2(\mod 3)$.
\end{Proposition}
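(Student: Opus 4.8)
The plan is to compare the true depth of $C_n$, which is supplied by Fact~\ref{cycle1}, against the invariant $\mdepth C_n$, and to show that the two agree precisely when $n\not\equiv 1\,(\mod 3)$. Since $I(C_n)$ is a squarefree monomial ideal, its associated primes are exactly the minimal primes over it, so $\mdepth C_n=n-\bight I(C_n)$, where $\bight I(C_n)$ is the largest height among those minimal primes. By the translation recalled in the preliminaries, this is the maximum cardinality of a minimal vertex cover of $C_n$. Thus the whole problem reduces to computing $\bight I(C_n)$ and matching $n-\bight I(C_n)$ against the three cases of the depth formula.

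To compute $\bight I(C_n)$ I would use the complementation between minimal vertex covers $C$ and maximal independent sets $\bar C=[n]\setminus C$: a subset $C$ is a minimal vertex cover if and only if $\bar C$ is a maximal independent set, since $C$ being a cover means $\bar C$ is independent, and $C$ being minimal means every vertex of $C$ has a neighbour outside $C$, i.e.\ $\bar C$ is also dominating. Maximizing $|C|$ is therefore the same as minimizing $|\bar C|$ over maximal independent sets. Exactly as in the line-graph argument, minimality of $C$ forbids three consecutive cycle vertices from all lying in $C$; dually, a maximal independent set is dominating, and since each of its vertices dominates only itself and its two cycle-neighbours, at least $\lceil n/3\rceil$ vertices are required. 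Explicit sets attaining this bound---take every third vertex, $\{3,6,\dots,n\}$ when $3\mid n$, and adjust a single vertex in the wrap-around block when $3\nmid n$---then show the bound is sharp. Hence the minimum size of a maximal independent set is $\lceil n/3\rceil$, so $\bight I(C_n)=n-\lceil n/3\rceil$ and consequently $\mdepth C_n=\lceil n/3\rceil$.

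It then remains to compare $\mdepth C_n=\lceil n/3\rceil$ with $\depth C_n$ from Fact~\ref{cycle1}, case by case on $n\bmod 3$. For $n\equiv 0\,(\mod 3)$ one has $\depth C_n=n/3=\lceil n/3\rceil=\mdepth C_n$, and for $n\equiv 2\,(\mod 3)$ one has $\depth C_n=(n+1)/3=\lceil n/3\rceil=\mdepth C_n$; in both situations $C_n$ has maximal depth. For $n\equiv 1\,(\mod 3)$, however, $\depth C_n=(n-1)/3$ while $\mdepth C_n=(n+2)/3$, so the two differ by $1$ and $C_n$ fails to have maximal depth. This yields the stated equivalence.

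The one genuinely delicate point I expect is the upper bound in the computation of $\bight I(C_n)$: the cyclic (rather than linear) arrangement of the vertices must be handled with care in the ``no three consecutive vertices'' / domination count, including the wrap-around edge $\{n,1\}$, so that the exhibited sets really are simultaneously independent and dominating and the remainder when $3\nmid n$ is absorbed correctly. Once $\mdepth C_n=\lceil n/3\rceil$ is secured, the comparison with Fact~\ref{cycle1} is a routine check of three residue classes.
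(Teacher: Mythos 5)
Your proposal is correct, and its skeleton is the same as the paper's: reduce the question to computing $\bight I(C_n)$ (equivalently $\mdepth C_n$, since for a squarefree monomial ideal the associated primes are the minimal primes, which correspond to minimal vertex covers) and then compare with the depth values supplied by Fact~\ref{cycle1} in the three residue classes. Where you differ is in how the big height is computed. The paper stays on the cover side: in each residue class it bounds the size of a minimal vertex cover by a blocks-of-three argument and exhibits an explicit maximum minimal vertex cover, getting $\bight I(C_n)=2n/3$, $(2n-1)/3$, $2(n-1)/3$ in the cases $n\equiv 0,2,1(\mod 3)$ respectively. You instead pass to complements, using the standard fact that $C$ is a minimal vertex cover if and only if $[n]\setminus C$ is a maximal independent set (equivalently, an independent dominating set), and then invoke the domination bound: each vertex of $C_n$ dominates exactly three vertices, so any dominating set has size at least $\lceil n/3\rceil$, and explicit every-third-vertex sets (with one adjustment when $3\nmid n$) attain it. This buys two things: a single uniform formula $\mdepth C_n=\lceil n/3\rceil$ instead of three parallel cases, and a cleaner handling of the cyclic wrap-around, which is exactly the delicate point in the paper's blocks-of-three count (the paper's argument at this step only gestures at ``a similar argument as in the proof of Proposition~\ref{line}''). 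The paper's version, on the other hand, hands you the maximum minimal prime $\pp_C$ explicitly, which is in the spirit of the rest of the paper (the same explicit covers reappear in the proof of Proposition~\ref{whisker}). The final comparison with Fact~\ref{cycle1} is identical in both treatments: equality of $\depth$ and $\mdepth$ for $n\equiv 0,2(\mod 3)$, and a gap of $1$ for $n\equiv 1(\mod 3)$.
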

\begin{proof}
Let $I=I(C_n)$ be the edge ideal of $C_n$ in a polynomial ring $S$ with $n$ variables. We need to consider the following three cases.

Case 1:
 $n\equiv 0(\mod 3)$. For the maximum
minimal vertex covers of cycles one can use the line graphs. A similar argument as in the proof of Proposition \ref{cycle2} shows that,  the cardinality of a minimal vertex cover of $C_n$ in this case can be at most $2n/3$. The set
 \[
C=\{i, i+1, i+3, i+4,  i+6, i+7,\dots, n-i-1, n-i \}
 \]
 is a maximum minimal vertex cover of $C_n$ for all $i$. Thus,
  \[
 \pp_C=(x_i, x_{i+1}, x_{i+3}, x_{i+4},  x_{i+6}, x_{i+7},\dots, x_{n-i-1}, x_{n-i})
  \]
  is a minimal prime ideal of $I$ with maximum height and so $\bight I=2n/3$. Hence $\mdepth C_n=n-2n/3=n/3=\depth C_n.$ Fact \ref{cycle1} provides the last equality. Therefore, $C_n$ has maximal depth.

Case 2:   $n\equiv 2(\mod 3)$.  A similar argument as in the proof of Proposition \ref{cycle2} shows that,  the cardinality of a minimal vertex cover of $C_n$ in this case can be at most $2(n-2)/3+1=(2n-1)/3$. One observes that the set
\[
 C= \{ i, i+1, i+3, i+4, \dots, n+i-5, n+i-4, n+i-2\}
 \]
is a maximum minimal vertex cover of $C_n$ for all $i$. Hence
\[
\pp_{C}=(x_i, x_{i+1}, x_{i+3}, x_{i+4}, \dots, x_{n+i-5}, x_{n+i-4}, x_{n+i-2})
 \]
 is a minimal prime ideal of $I$ with maximum height and so $\bight I=(2n-1)/3$. Consequently,
  \[
  \mdepth C_n=n-(2n-1)/3=(n+1)/3=\depth C_n.
  \]
   Fact \ref{cycle1} explains the last equality.

Case 3:  $n\equiv 1(\mod 3)$.  In this case, one has that, the cardinality of a minimal vertex cover of $C_n$ can be at most $2(n-1)/3$ and the set
 \[
C=\{ i, i+2, i+3, i+5, i+6, \dots, n+i-5, n+i-4, n+i-2\}
 \]
is a maximum minimal vertex cover of $C_n$ for all $i$. Hence
\[
\pp_{C}=(x_i, x_{i+2}, x_{i+3}, x_{i+5}, x_{i+6}, \dots, x_{n+i-5}, x_{n+i-4}, x_{n+i-2})
\]
 is a minimal prime ideal of $I$ with maximum height and so $\bight I=2(n-1)/3$. Thus $\mdepth C_n=n-2(n-1)/3=(n+2)/3$. Fact \ref{cycle1} provides $\depth C_n=(n-1)/3$. Thus, $C_n$ has no maximal depth in this case.
\end{proof}


 Adding a whisker to $C_n$ at a vertex $x_1$  means adding a new vertex
$x_{n+1}$  and the edge $\{x_1, x_{n+1}\}$ to $C_n$.   We denote by $C_n\cup W(x_1)$  the graph
obtained from $C_n$  by adding a whisker at $x_1$.  Thus $I(C_n\cup W(x_1))=I(C_n)+(x_1x_{n+1})$.
 In  the following, by using Proposition \ref{line}, we show that  $C_n$ and $C_n\cup W(x_1)$ have the same depth as well as  $C_n\cup W(x_1)$ has maximal depth.
\begin{Proposition}
\label{whisker}
The following statements hold.
\[
\depth C_n=\depth  C_n\cup W(x_1),
\]
 and $ C_n\cup W(x_1)$ has maximal depth.
\end{Proposition}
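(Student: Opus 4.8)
The plan is to sandwich $\depth(C_n\cup W(x_1))$ between $\mdepth C_n$ from below and $\mdepth(C_n\cup W(x_1))$ from above, and to show that both of these equal $\mdepth C_n$. This settles the maximal‑depth assertion in one stroke and reduces the depth equality to a property of $C_n$ alone. Throughout write $I=I(C_n)+(x_1x_{n+1})$ in $S=K[x_1,\dots,x_{n+1}]$ and $M=S/I$, and recall that for a squarefree monomial ideal $\depth M\le \mdepth M=(n+1)-\bight I$.

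For the lower bound I would split off the attachment vertex $x_1$ via the exact sequence
\[
0\to S/(I:x_1)\to S/I\to S/(I,x_1)\to 0 ,
\]
whose first map is multiplication by $x_1$. The key observation is that both ends are, up to one free variable, edge ideals of \emph{line graphs}, so Proposition~\ref{line} applies. Indeed, modulo $x_1$ the three generators meeting $x_1$ die and $I(C_n)$ becomes the path $L_{n-1}$ on $x_2,\dots,x_n$ while $x_{n+1}$ survives as a free variable, so $\depth S/(I,x_1)=\depth L_{n-1}+1$; likewise $(I:x_1)=(x_2,x_n,x_{n+1})+(x_3x_4,\dots,x_{n-2}x_{n-1})$, whose quotient is the path $L_{n-3}$ on $x_3,\dots,x_{n-1}$ together with the free variable $x_1$, so $\depth S/(I:x_1)=\depth L_{n-3}+1$. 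Feeding the three residues of $n$ modulo $3$ into Proposition~\ref{line} and invoking $\depth M\ge \min\{\depth S/(I:x_1),\depth S/(I,x_1)\}$, which follows from Lemma~\ref{Depth lemma}(a),(b), a direct check gives $\depth M\ge \mdepth C_n$ in every case.

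For the upper bound I would compute $\bight I$ combinatorially. A minimal vertex cover of $C_n\cup W(x_1)$ must meet the whisker edge $\{1,n+1\}$, and by minimality it cannot contain both of its endpoints; hence it is either a minimal vertex cover of $C_n$ containing the vertex $1$, or a minimal vertex cover of $C_n$ avoiding $1$ together with the leaf $n+1$. Since every minimal vertex cover of $C_n$ omits some vertex, rotational symmetry lets me rotate a maximum one so that it omits $1$; the second family then yields the larger covers, giving $\bight I=\bight I(C_n)+1$ and therefore $\mdepth(C_n\cup W(x_1))=(n+1)-(\bight I(C_n)+1)=\mdepth C_n$. Combining the two bounds, $\mdepth C_n\le \depth M\le \mdepth M=\mdepth C_n$, so $\depth M=\mdepth M$: the graph $C_n\cup W(x_1)$ has maximal depth for every $n$, and moreover $\depth(C_n\cup W(x_1))=\mdepth C_n$.

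It remains to match $\mdepth C_n$ with $\depth C_n$, and this is where I expect the real obstacle to lie. By Fact~\ref{cycle1} together with the big‑height values in Proposition~\ref{cycle2}, one has $\mdepth C_n=\depth C_n$ exactly when $C_n$ has maximal depth, i.e. for $n\equiv 0$ or $n\equiv 2\ (\mod 3)$; in those classes the sandwich gives $\depth C_n=\depth(C_n\cup W(x_1))$ as asserted. For $n\equiv 1\ (\mod 3)$, however, $\mdepth C_n=(n+2)/3$ while $\depth C_n=(n-1)/3$, so the computation above forces $\depth(C_n\cup W(x_1))=(n+2)/3=\depth C_n+1$, and the stated equality of depths cannot hold as written — the trap being the naive reading of the Depth Lemma that $\depth M$ equals the minimum of the two outer depths when they differ. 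I would therefore keep the maximal‑depth conclusion for all $n$, but state the first assertion either restricted to $n\equiv 0,2\ (\mod 3)$ or, equivalently and uniformly, as $\depth(C_n\cup W(x_1))=\mdepth C_n$.
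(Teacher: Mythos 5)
Your analysis is correct, and you have in fact uncovered a genuine error in the paper: the equality $\depth C_n=\depth (C_n\cup W(x_1))$ fails when $n\equiv 1(\mod 3)$, while the maximal depth assertion holds for every $n$. The paper colons by the leaf variable $x_{n+1}$, so the two ends of its exact sequence are $S/I(C_n)$ and $L_{n-1}$ plus a free variable; in Case 2 this yields only $\depth R/I\geq (n-1)/3$, and the attempt to match this with $\mdepth R/I$ rests on a miscount. The set $C=\{n+1,2,3,5,6,\dots,n-5,n-4,n-2,n\}$ exhibited there is indeed a maximum minimal vertex cover, but its cardinality is $(2n+1)/3$, not $2(n+2)/3$: when $n\equiv 1(\mod 3)$, a minimal cover containing $1$ is $\{1\}\cup D$ with $D$ a minimal cover of the path on $2,\dots,n$, hence has at most $1+2(n-1)/3$ elements, and a minimal cover containing $n+1$ is $\{n+1\}\cup C'$ with $C'$ a minimal cover of $C_n$ avoiding $1$, again at most $1+2(n-1)/3$ elements. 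Hence $\mdepth R/I=(n+1)-(2n+1)/3=(n+2)/3$, not $(n-1)/3$, and the paper's sandwich collapses. Your colon by the attachment vertex $x_1$ is exactly what repairs the computation: both ends become paths, $\depth R/(I:x_1)=\depth L_{n-3}+1=(n+2)/3$ and $\depth R/(I,x_1)=\depth L_{n-1}+1=(n+2)/3$, which together with the corrected $\mdepth$ forces $\depth R/I=(n+2)/3=\depth C_n+1$. A concrete confirmation at $n=4$: $I=(x_1x_2,x_2x_3,x_3x_4,x_4x_1,x_1x_5)=(x_1,x_3)\cap(x_1,x_2,x_4)\cap(x_2,x_4,x_5)$, so $\mdepth R/I=2$; moreover $x_3+x_5$ is regular on $R/I$ and the maximal ideal is not associated to $R/(I,x_3+x_5)$, so $\depth R/I=2$, whereas $\depth C_4=1$.

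One step of your write-up needs the same repair just used, although your conclusions survive. You assert that a minimal vertex cover of $C_n\cup W(x_1)$ containing $1$ is a minimal vertex cover of $C_n$; this is false, since $1$ may be redundant for the cycle and required only for the whisker edge. For $n=4$, the set $\{1,2,4\}$ is a minimal cover of $C_4\cup W(x_1)$ of cardinality $3>\bight I(C_4)=2$, so the bound your dichotomy suggests for this family is violated. The correct description is that such covers are exactly $\{1\}\cup D$ with $D$ a minimal vertex cover of the line graph $L_{n-1}$ on $2,\dots,n$, hence of size at most $1+\bight I(L_{n-1})$; comparing with $1+\bight I(C_n)$ in each congruence class, using the big height values computed in the proofs of Propositions~\ref{line} and~\ref{cycle2}, one has $\bight I(L_{n-1})\leq\bight I(C_n)$ always, so $\bight I(C_n\cup W(x_1))=\bight I(C_n)+1$ and therefore $\mdepth (C_n\cup W(x_1))=\mdepth C_n$, which is what your sandwich requires. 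With this patch your proof is complete and establishes the corrected statement: $C_n\cup W(x_1)$ has maximal depth and $\depth (C_n\cup W(x_1))=\mdepth C_n$ for all $n$, so the depth equality asserted in the proposition holds precisely when $n\equiv 0(\mod 3)$ or $n\equiv 2(\mod 3)$.
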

\begin{proof}
We set $I(C_n)=J$ and $I(C_n\cup W(x_1))=I$. Consider the exact sequence
\begin{equation}
\label{exact}
0\to R/(I: x_{n+1}) \to R/I \to R/(I+(x_{n+1})) \to 0,
\end{equation}
where $R=S[x_{n+1}]$.  One has
\[
 R/(I: x_{n+1}) \iso K[x_2, \dots, x_n][x_{n+1}]/(x_2x_3, x_3x_4, \dots, x_{n-1}x_n),
\]
and
\[
R/(I+(x_{n+1})) \iso S/J.
\]
We consider the following three cases:

Case 1: $n\equiv 0(\mod 3)$.  Thus $n-1\equiv 2(\mod 3)$. By Proposition \ref{line}
\[
 \depth K[x_2, \dots, x_n]/(x_2x_3, x_3x_4, \dots, x_{n-1}x_n)=((n-1)+1)/3=n/3.
\]
Hence $\depth  R/(I: x_{n+1})=n/3+1$.
Fact \ref{cycle1} provides $\depth S/J=n/3$. Thus by using (1) we have
\[
\depth R/I \geq \min \{n/3+1, n/3 \}=n/3.
\]
  For computing $\mdepth R/I$ in this case, a similar argument as in the proof of Proposition \ref{cycle2} shows that,  the cardinality of a minimal vertex cover of $C_n\cup W(x_1)$ can be at most $(2n+3)/3$. One observes that the set
\[
C=\{n+1, 2, 3, 5, 6, \dots, n-4, n-3, n-1, n\}
\]
is a maximum minimal vertex cover of $C_n\cup W(x_1)$. Thus
\[
\pp_C=(x_{n+1}, x_2, x_3, x_5, x_6, \dots, x_{n-4}, x_{n-3}, x_{n-1}, x_n)
\]
 is a minimal prime ideal of $I$ with $\bight \pp_C=(2n+3)/3$. Hence
\[
\mdepth R/I=(n+1)-\bight \pp_C=(n+1)-(2n+3)/3=n/3.
\]
Consequently,
\[
n/3 \leq \depth R/I\leq \mdepth R/I=n/3.
\]
 Thus,  the results follow in this case.

Case 2: $n\equiv 1(\mod 3)$.  Thus $n-1\equiv 0(\mod 3)$. By Proposition \ref{line}
\[
 \depth K[x_2, \dots, x_n]/(x_2x_3, x_3x_4, \dots, x_{n-1}x_n)=(n-1)/3.
\]
Hence $\depth  R/(I: x_{n+1})=(n+2)/3$.
  Fact \ref{cycle1} explains $\depth S/J=(n-1)/3$. Thus by using (1) we have
  \[
   \depth R/I \geq \min \{(n+2)/3, (n-1)/3 \}=(n-1)/3.
   \]
 One observes that the cardinality of a minimal vertex cover of $C_n\cup W(x_1)$ in this case can be at most $2(n+2)/3$ and the set
 \[
C=\{n+1, 2, 3, 5, 6, \dots, n-5, n-4, n-2, n\}
\]
is a maximum minimal vertex cover of $C_n\cup W(x_1)$. Thus
\[
\pp_{C}=(x_{n+1}, x_2, x_3, x_5, x_6, \dots, x_{n-5}, x_{n-4}, x_{n-2}, x_n)
 \]
 is a minimal prime ideal of $I$ with $\bight \pp_{C}=2(n+2)/3$. Hence
 \[
 \mdepth R/I=(n+1)-(2n+4)/3=(n-1)/3.
 \]
 We conclude that
 \[
 (n-1)/3 \leq \depth R/I\leq \mdepth R/I=(n-1)/3.
 \]
 Therefore, the desired conclusions follow in this case.

Case 3: $n\equiv 2(\mod 3)$.  Thus $n-1\equiv 1(\mod 3)$. By Proposition \ref{line}
\[
 \depth K[x_2, \dots, x_n]/(x_2x_3, x_3x_4, \dots, x_{n-1}x_n)=((n-1)+2)/3=(n+1)/3.
\]
Hence $\depth  R/(I: x_{n+1})=(n+4)/3$.
 Fact \ref{cycle1} provides  $\depth S/J=(n+1)/3$. Thus by using (1) we have
 \[
 \depth R/I\geq \min \{(n+4)/3, (n+1)/3 \}=(n+1)/3.
 \]
 One observes that the cardinality of a minimal vertex cover of $C_n\cup W(x_1)$ can be at most $2(n+1)/3$ and the set
 \[
C=\{n+1, 2, 3, 5, 6, \dots, n-3, n-2, n\}
\]
is a maximum minimal vertex cover of $C_n\cup W(x_1)$. Thus
\[
\pp_{C}=(x_{n+1}, x_2, x_3, x_5, x_6, \dots,  x_{n-3}, x_{n-2}, x_n)\]
 is a minimal prime ideal  of $I$ with $\bight \pp_{C}=2(n+1)/3$. Hence
 \[
 \mdepth R/I=(n+1)-\bight \pp_{C}=(n+1)-(2n+2)/3=(n+1)/3.
 \]

Consequently,
\[
(n+1)/3 \leq \depth R/I\leq \mdepth R/I=(n+1)/3.
\]
 Therefore, the desired conclusions follow in this case too.
\end{proof}



We remark that the second part of Proposition \ref{whisker} also follows from \cite[Corollary 3.4]{FH} in a different way.


\section{Transversal polymatroids and Ideals of Veronese type}
In this section, we classify all transversal polymatroidal ideals and all ideals of Veronese type which have maximal depth.
Let $F$ be a non-empty subset of $[n]$. We denote by $\pp_F$ the monomial prime
ideal $(\{x_i: i \in F\})$. A {\em transversal polymatroidal ideal} is an ideal $I$ of the form
$I = \pp_{F_1}\dots\pp_{F_r}$  where $F_1, \dots, F_r$ is a collection of non-empty subsets of $[n]$ with $r\geq 1$.
Let $G_I$ be the graph with vertex set $\{1, \dots, r\}$ and for which $\{i, j\}$  is an edge of $G_I$ if and only if $F_i\cap F_j \neq\emptyset$.
We recall the following fact from \cite[Theorem 4.12]{HRV}.
\begin{Fact}{\em
\label{depth}
 Let $I=\pp_{F_1}\dots\pp_{F_r}\subset S$ be a transversal polymatroidal ideal. Then
\[
\depth S/I=c(G_I)-1+n-|\cup_{i=1}^r F_i\mid,
\]
where by $c(G_I)$ we denote the number of connected components of the graph $G_I$.}
\end{Fact}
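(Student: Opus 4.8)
The plan is to reduce the formula to the single key assertion that a transversal polymatroidal ideal with \emph{connected} graph $G_I$ and full support $\cup_i F_i=[n]$ has $\depth S/I=0$, and then to recover the general statement from this special case. First I would strip off the variables $x_l$ with $l\notin W:=\cup_{i} F_i$: these occur in no generator, so $S/I$ is a polynomial extension in $n-|W|$ variables of the corresponding quotient in $K[x_l:l\in W]$, which raises depth by exactly $n-|W|$. This accounts for the term $n-|\cup_i F_i|$ and reduces us to the case $W=[n]$, where the claim becomes $\depth S/I=c(G_I)-1$. Next I would exploit the definition of $G_I$: if $i,j$ lie in different connected components then $\{i,j\}$ is not an edge, so $F_i\cap F_j=\emptyset$; hence the supports $W_k=\cup_{i\in C_k}F_i$ of the components $C_1,\dots,C_c$ are pairwise disjoint and partition $[n]$. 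Writing $I_k=\prod_{i\in C_k}\pp_{F_i}$, a direct check on monomials shows that for monomial ideals in disjoint sets of variables the product equals the intersection, so $I=\bigcap_{k=1}^c I_k$ with the $I_k$ supported on disjoint variable groups, each being transversal polymatroidal with connected graph.

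Granting the key assertion $\depth S_k/I_k=0$ for each component (with $S_k=K[x_l:l\in W_k]$), I would prove by induction on $c$ that $\depth S/\bigcap_{k=1}^c I_k=c-1$. The inductive step treats $J=\bigcap_{k<c}I_k\subset S'$ and $I_c\subset S_c$, living in disjoint variables, through the Mayer--Vietoris sequence
\[
0\to S/(JS\cap I_cS)\to S/JS\,\oplus\,S/I_cS\to S/(JS+I_cS)\to 0 .
\]
Using that adjoining polynomial variables adds to depth, and that $\depth(A\otimes_K B)=\depth A+\depth B$ for graded $K$-algebras, the right-hand term is $S'/J\otimes_K S_c/I_c$ of depth $(c-2)+0$, while the middle term has strictly larger depth (each quotient by a nonzero ideal has depth below its number of variables). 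Lemma~\ref{Depth lemma}(c) then gives $\depth S/(JS\cap I_cS)=(c-2)+1=c-1$, advancing the induction.

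The heart of the argument is the connected case. Assuming $G_I$ connected and $\cup_i F_i=[n]$, note that $I=\pp_{F_1}\cdots\pp_{F_r}$ is generated in degree $r$ by the transversal monomials $x_{a_1}\cdots x_{a_r}$ with $a_i\in F_i$; thus a monomial of degree $<r$ never lies in $I$, while a degree-$r$ monomial lies in $I$ iff it is itself such a transversal monomial. I would exhibit a socle element. Using connectivity, relabel so that $F_i\cap(F_1\cup\cdots\cup F_{i-1})\neq\emptyset$ for $i\geq2$, pick $a_i$ in this intersection together with a back-pointer $p(i)<i$ with $a_i\in F_{p(i)}$, and set $m=\prod_{i=2}^r x_{a_i}$, of degree $r-1$, so $m\notin I$. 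For any $j\in[n]$ choose $i_0$ with $j\in F_{i_0}$ and shift assignments along the tree path $v_0=i_0,v_1=p(v_0),\dots,v_s=1$: put $\sigma(v_0)=j$ and $\sigma(v_t)=a_{v_{t-1}}$ for $t\geq1$, and $\sigma(i)=a_i$ off the path. Since $j\in F_{i_0}$ and $a_{v_{t-1}}\in F_{v_t}$, this is a valid transversal whose monomial is exactly $m\,x_j$; hence $m x_j\in I$ for every $j$, so $\mm\,m\subseteq I$ with $m\notin I$, giving $\mm\in\Ass(S/I)$ and $\depth S/I=0$.

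The genuinely delicate point is this last construction: producing $m$ and verifying that $m x_j$ is a transversal monomial for \emph{every} variable $j$. The crux is the tree-shifting step, where one must check that the reassignment along the path consumes precisely the multiset of $m$ together with the single extra variable $j$ (the path tokens $a_{v_0},\dots,a_{v_{s-1}}$ together with the off-path tokens recombine to all of $a_2,\dots,a_r$). By contrast, the reductions of the first two paragraphs are routine homological bookkeeping once the disjoint-support factorization and the tensor-product depth formula are available.
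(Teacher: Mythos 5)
Your proposal is correct, but there is nothing in the paper to compare it against: the paper states this result as a Fact quoted from \cite[Theorem 4.12]{HRV} and gives no proof at all, so any argument is ``extra'' relative to the paper. Checking your argument: the two reductions are sound (unused variables contribute exactly $n-|\cup_{i=1}^r F_i|$ to depth; the components of $G_I$ have pairwise disjoint supports $W_k$, so $I=\bigcap_k I_k$ with the $I_k$ in disjoint variable sets), and the Mayer--Vietoris induction works because both summands of the middle term have depth at least $c-1$: for $S/JS$ this is $(c-2)+|W_c|$, and for $S/I_cS$ it is $n-|W_c|=\sum_{k<c}|W_k|\geq c-1$ since every component has nonempty support. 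This strictly exceeds the depth $c-2$ of the right-hand term, so Lemma \ref{Depth lemma}(c) forces depth exactly $c-1$; your parenthetical justification of this inequality (``each quotient by a nonzero ideal has depth below its number of variables'') is garbled, but the inequality itself is immediate, so this is a cosmetic flaw, not a gap. The connected case is also correct: the back-pointer edges $\{i,p(i)\}$, $i\geq 2$, form a spanning tree of $G_I$, the monomial $m=\prod_{i\geq 2}x_{a_i}$ has degree $r-1<r$ hence $m\notin I$, and your path-shifting argument correctly verifies that $mx_j$ is a transversal monomial for every $j\in[n]$, giving $\Ann(\bar m)=\mm$, hence $\mm\in\Ass(S/I)$ and $\depth S/I=0$. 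Two remarks on efficiency and provenance: first, you could have obtained the connected case for free from Fact \ref{asstree} (also quoted in the paper from \cite{HRV}), since a spanning tree $\mathcal{T}$ of a connected $G_I$ with full support satisfies $\pp_{\mathcal{T}}=\sum_{i=1}^r\pp_{F_i}=\mm$, so $\mm\in\Ass(S/I)$ directly; second, your explicit witness monomial is in effect a proof of exactly that instance of Fact \ref{asstree}, and your overall route (socle witness built from a spanning tree, then induction over connected components via the Depth Lemma) is essentially a blind reconstruction of the original argument in \cite{HRV} rather than a genuinely different one.
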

Let $\mathcal{H}$ be a subgraph of $G_I$. We associate the prime ideal $\pp_\mathcal{H}=\sum_{i\in \mathcal{V}(\mathcal{H})}\pp_{F_i}$. We denote by $\Ass(I)$  the set
of associated prime ideals of $R/I$. The set associated primes of $R/I$ is explicitly described in \cite[Theorem 4.7]{HRV} as follows.

\begin{Fact}{\em
\label{asstree}
Let $I\subset S$ be a transversal polymatroidal ideal. Then
\[
\Ass(I)=\{ \pp_\mathcal{T}: \mathcal{T} \quad \text {is a tree in} \;\; G_I\}.
\]}
\end{Fact}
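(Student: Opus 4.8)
The plan is to compute $\Ass(S/I)$ by testing, for each subset $W\subseteq[n]$, whether the monomial prime $\pp_W=(x_i:i\in W)$ is associated to $S/I$. Since $I$ is a monomial ideal, every associated prime of $S/I$ has this form, so it suffices to treat the $\pp_W$. First I would invoke the standard restriction principle for monomial ideals: writing $S_W=K[x_i:i\in W]$ and letting $I_W\subseteq S_W$ be the image of $I$ under the substitution $x_j\mapsto 1$ for $j\notin W$, one has $\pp_W\in\Ass(S/I)$ if and only if $\depth_{S_W}(S_W/I_W)=0$. This is just the localization criterion $\depth(S/I)_{\pp_W}=0$ recast in a graded form to which Fact~\ref{depth} applies verbatim.

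The key computation is that $I_W$ is again transversal polymatroidal. A factor $\pp_{F_i}$ becomes the unit ideal under the substitution precisely when $F_i\not\subseteq W$ (it then contains some $x_j$ with $j\notin W$, sent to $1$), and becomes $\pp_{F_i}\subseteq S_W$ when $F_i\subseteq W$. Hence $I_W=\prod_{i\in A_W}\pp_{F_i}$, where $A_W=\{i:F_i\subseteq W\}$, and its associated graph is the subgraph $H_W$ of $G_I$ induced on $A_W$. If $A_W=\emptyset$ then $I_W=S_W$ and $\pp_W$ lies outside the support, so assume $A_W\neq\emptyset$. Fact~\ref{depth} then gives
\[
\depth_{S_W}(S_W/I_W)=c(H_W)-1+|W|-\Bigl|\bigcup_{i\in A_W}F_i\Bigr|.
\]
Since $c(H_W)-1\geq 0$ and $|W|-\bigl|\bigcup_{i\in A_W}F_i\bigr|\geq 0$ (because $\bigcup_{i\in A_W}F_i\subseteq W$), this depth vanishes if and only if $H_W$ is connected and $W=\bigcup_{i\in A_W}F_i$.

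It remains to match these $W$ with the primes $\pp_\mathcal{T}$. If $\pp_W\in\Ass(S/I)$, then $H_W$ is connected, so it contains a spanning tree $\mathcal{T}$ on vertex set $A_W$; as $\pp_\mathcal{T}=\pp_{\bigcup_{i\in A_W}F_i}=\pp_W$, the prime $\pp_W$ is of the asserted form. Conversely, given a tree $\mathcal{T}$ with vertex set $V$, put $W=\bigcup_{i\in V}F_i$, so $\pp_\mathcal{T}=\pp_W$; then $A_W\supseteq V$, every $i\in A_W$ has $F_i$ meeting $V$ and so attaches to the connected set $V$, giving $H_W$ connected with $W=\bigcup_{i\in A_W}F_i$, whence $\pp_W\in\Ass(S/I)$ by the criterion above. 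I expect the main obstacle to be justifying the restriction principle cleanly and confirming that the depth over the local ring $S_{\pp_W}$ agrees with the graded depth of $S_W/I_W$ — this matters because $I$ is in general \emph{not} squarefree (e.g.\ $\pp_F^2$ carries the generator $x_i^2$), so the convenient ``associated $=$ minimal prime'' dictionary is unavailable and the embedded primes must be produced exactly through this depth-zero analysis.
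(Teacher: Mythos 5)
Your proposal cannot be checked against an internal argument, because the paper gives no proof of this statement: it is quoted verbatim as a Fact from \cite[Theorem 4.7]{HRV}. Judged on its own terms, your derivation is correct. The monomial-localization criterion you invoke ($\pp_W\in\Ass(S/I)$ if and only if $\depth_{S_W} S_W/I_W=0$, where $I_W$ is the image of $I$ under $x_j\mapsto 1$ for $j\notin W$) is standard, and is in fact the same tool used throughout \cite{HRV}. The key computation is right: extension of ideals along a ring map commutes with products, so $I_W=\prod_{i\in A_W}\pp_{F_i}$ with $A_W=\{i: F_i\subseteq W\}$, whose associated graph is the induced subgraph $H_W$; and since both summands $c(H_W)-1$ and $|W|-|\bigcup_{i\in A_W}F_i|$ in Fact~\ref{depth} are nonnegative, depth zero forces exactly the two conditions you state. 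The two-way matching with trees (spanning tree of $H_W$ in one direction; attaching the extra vertices of $A_W$ to the connected vertex set of $\mathcal{T}$ in the other) is also sound, as is your handling of the degenerate case $A_W=\emptyset$.

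The route, however, is genuinely different from the source's, and the difference deserves note. In \cite{HRV} the inclusion $\supseteq$ is proved constructively: for each tree $\mathcal{T}$ one exhibits an explicit witness monomial $u$, built by choosing for every edge $\{i,j\}$ of $\mathcal{T}$ a variable lying in $F_i\cap F_j$, such that $I:u=\pp_{\mathcal{T}}$; the depth formula (the paper's Fact~\ref{depth}, their Theorem 4.12) is proved \emph{afterwards}, using this description of $\Ass(I)$. Your argument runs in the opposite direction, deducing the description of $\Ass(I)$ from the depth formula. Inside this paper, where both Facts are quoted as black boxes, that deduction is a legitimate and rather clean observation --- it shows one quoted Fact follows from the other via monomial localization. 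But it would be circular as a from-scratch proof of the statement, since the known proof of Fact~\ref{depth} passes through the very result being proved; it is also less informative, detecting associated primes abstractly through depth rather than producing the witness monomials.
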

In the following we characterize all transversal polymatroidal ideals which have maximal depth.
\begin{Proposition}
Let $I=\pp_{F_1}\dots\pp_{F_r}\subset S$ be a transversal polymatroidal ideal. The following conditions are equivalent:
 \begin{itemize}
\item[{(a)}]  $I$ has maximal depth;
\item[{(b)}] $I$ is a product of monomial prime ideals such that at most one of the factors is not principal.
\end{itemize}
\end{Proposition}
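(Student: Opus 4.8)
The plan is to pin down $\depth S/I$ and $\mdepth S/I$ separately in terms of the graph $G_I$ and then compare them. Denote the connected components of $G_I$ by $C_1,\dots,C_c$ (so $c=c(G_I)$) and put $U_j=\bigcup_{i\in V(C_j)}F_i$. If $i,i'$ lie in different components then $F_i\cap F_{i'}=\emptyset$, so the $U_j$ are pairwise disjoint and $|\bigcup_{i=1}^rF_i|=\sum_{j=1}^c|U_j|$. With this notation Fact \ref{depth} becomes
\[
\depth S/I=c-1+n-\sum_{j=1}^c|U_j|.
\]

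First I would evaluate $\mdepth S/I$ via Fact \ref{asstree}. For a tree $\mathcal{T}$ in $G_I$ the prime $\pp_{\mathcal{T}}=\sum_{i\in V(\mathcal{T})}\pp_{F_i}$ is generated by the variables indexed by $\bigcup_{i\in V(\mathcal{T})}F_i$, so $\dim S/\pp_{\mathcal{T}}=n-|\bigcup_{i\in V(\mathcal{T})}F_i|$. This union depends only on the vertex set of $\mathcal{T}$, and since a tree is connected it lives inside a single component; the union is largest when $\mathcal{T}$ is a spanning tree of the component with the biggest vertex-union. Hence
\[
\mdepth S/I=n-\max_{1\le j\le c}|U_j|.
\]

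Comparing the two formulas, $I$ has maximal depth exactly when $c-1=\sum_{j=1}^c|U_j|-\max_j|U_j|=\sum_{j\neq j_0}|U_j|$, where $U_{j_0}$ attains the maximum. Each $|U_j|\ge 1$, so the right-hand side is a sum of $c-1$ positive integers; equality therefore forces $|U_j|=1$ for every $j\neq j_0$. This gives the key reduction
\[
I\text{ has maximal depth}\iff |U_j|=1\text{ for all but at most one component }C_j.
\]

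It remains to match this with condition (b). A component satisfies $|U_j|=1$ precisely when all the $F_i$ attached to it coincide with a single singleton, i.e. the corresponding factors are one and the same principal prime, whereas $|U_j|\ge 2$ exactly when that component carries a non-principal factor. The implication (b)$\Rightarrow$(a) is then immediate: if at most one factor $\pp_{F_i}$ is non-principal, then at most one component can have $|U_j|>1$, so the reduction applies. The step I expect to be the real obstacle is the reverse implication (a)$\Rightarrow$(b): one must exclude the possibility that a single component of $G_I$ simultaneously carries two distinct non-principal factors, and it is precisely here---controlling how several $F_i$ with $|F_i|\ge 2$ may overlap inside one component while keeping $|U_j|$ small---that the transversal polymatroidal structure must be exploited in full.
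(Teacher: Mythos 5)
Your computation of $\depth S/I$ and $\mdepth S/I$ and the resulting reduction are correct, and they coincide exactly with the paper's own argument: the paper's $l_j$ are your $|U_j|$, its equation $l_1+\cdots+l_{k-1}=k-1$ together with $l_j\geq 1$ is your ``key reduction'', and the paper obtains $\mdepth S/I=n-l_k$ from Fact \ref{asstree} by the same spanning-tree observation you make. Your derivation of (b)~$\Rightarrow$~(a) from the reduction is also sound, and is if anything cleaner than the paper's, which reverifies $\Ass$ and $\depth$ by hand for $I=(x_1)\cdots(x_{k-1})(x_k,\ldots,x_n)$.

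The step you flag as ``the real obstacle'' is indeed the gap in your proposal, but it is not a gap you could have closed: the implication (a)~$\Rightarrow$~(b) is false, and the paper's proof of it breaks down at precisely the point you identified. Nothing prevents the single component with $|U_{j_0}|>1$ from carrying two overlapping non-principal factors. Take $I=(x_1,x_2)(x_2,x_3)\subset S=K[x_1,x_2,x_3]$. Then $x_2\mm\subseteq I$ while $x_2\notin I$ (as $I$ is generated in degree $2$), so $\mm=(I:x_2)\in\Ass(S/I)$, whence $\depth S/I=\mdepth S/I=0$ and $I$ has maximal depth; this agrees with your reduction, since $G_I$ is connected. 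But $I$ does not satisfy (b) under any representation: a product of monomial primes with at most one non-principal factor has the form $(u)$ or $u\,\pp_F$ with $u$ a monomial, whereas the minimal generators of $I$ have gcd $1$ and $I$ is not prime (not even radical, since $x_2^2\in I$ but $x_2\notin I$). The paper's proof glosses over exactly this point when it passes from ``$l_j=1$ for $j<k$'' to ``Consequently, $I=(x_1)\cdots(x_{k-1})(x_k,\ldots,x_n)$'': the small components are indeed powers of single variables, but nothing forces the ideal attached to the remaining component to be a single prime. What your reduction actually establishes is the correct form of the classification: $I$ has maximal depth if and only if at most one connected component of $G_I$ involves more than one variable; equivalently, $I=u\cdot J$ with $u$ a monomial and $J$ a transversal polymatroidal ideal, in variables disjoint from those of $u$, whose graph $G_J$ is connected. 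Condition (b) is the special case in which $J$ is prime (or absent); it is sufficient but not necessary.
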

\begin{proof}
(a)\implies (b):
We may assume that $\cup_{i=1}^r F_i=[n]$. Let $k=c(G_I)$ and $G_1, \dots, G_k$ be the connected components of $G_I$. Fact \ref{depth} provides
$\depth S/I=k-1$. We denote by $I_1, \dots, I_k$  the transversal polymatroidal ideals for which the associated graphs are the connected components of $G_I$. Hence $I=I_1\dots I_k=I_1\cap \dots \cap I_k$, since
the ideals $I_j$ are generated in pairwise disjoint sets of variables. Thus $\Ass(I)=\bigcup_{i=1}^k \Ass(I_i)$.  We may assume that $1\leq l_1\leq \dots \leq l_k$ where for all $j$ we have $l_j=|\bigcup_{i\in \mathcal{V}(G_j)}F_i| $. Note that $l_1+ \dots + \l_k=n$. In view of Fact \ref{asstree}, we have $\mdepth S/I=n-l_k$.  Since $I$ has maximal depth, it follows that $n-l_k=k-1$, and hence $l_1+\dots +l_{k-1}=k-1$. Consequently,  $I=(x_1)\dots (x_{k-1})(x_k, \dots, x_n)$, as desired.

(b)\implies (a): If $I$ is a product of monomial prime ideals such that all the factors are principal, then $S/I$ is Cohen--Macaulay and hence $I$ has maximal depth. Thus, we may assume that  $I=(x_1)\dots (x_{k-1})(x_k, \dots, x_n)$.  As \[
\Ass(I)=\{ (x_1), \dots,  (x_{k-1}), (x_k, \dots, x_n)\},
 \]
 we have $\mdepth S/I=n-(n-k+1)=k-1$. The ideal $I$ is a transversal polymatroidal ideal. It follows from Theorem \ref{depth} that $\depth S/I=k-1$. Here $c(G_I)=k$ and $|\cup_{i=1}^r F_i\mid=n$. Therefore, $I$ has maximal depth.
\end{proof}
As a consequence one has
\begin{Corollary}
Let $I\subset S$ be the intersection of monomial prime ideals in pairwise disjoint sets of variables.  Then $I$ has maximal depth if and only if $I$ is a product of monomial prime ideals such that at most one of the factors is not principal.
\end{Corollary}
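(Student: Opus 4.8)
The plan is to deduce this Corollary directly from the preceding Proposition by recognizing that the hypothesis forces $I$ to be a transversal polymatroidal ideal. Write $I=\pp_{F_1}\cap\dots\cap\pp_{F_r}$, where the $F_i\subset[n]$ are pairwise disjoint, so that no two of the generating sets of variables overlap.

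First I would verify the elementary identity
\[
\pp_{F_1}\cap\dots\cap\pp_{F_r}=\pp_{F_1}\cdots\pp_{F_r}.
\]
For ideals generated by variables lying in pairwise disjoint sets, the intersection and the product coincide: the inclusion $\supseteq$ is automatic, and for $\subseteq$ one checks on monomials, since a monomial lying in every $\pp_{F_i}$ must be divisible by a variable from each $F_i$, and these variables are distinct by disjointness, whence the monomial lies in the product. Consequently $I=\pp_{F_1}\cdots\pp_{F_r}$ is a transversal polymatroidal ideal, and its associated graph $G_I$ carries no edges because $F_i\cap F_j=\emptyset$ for all $i\neq j$.

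With $I$ thus exhibited as such a product, I would then simply invoke the preceding Proposition: $I$ has maximal depth if and only if it is a product of monomial prime ideals at most one of whose factors is not principal. Since $\pp_{F_i}$ is principal precisely when $|F_i|=1$, this is exactly the asserted equivalence, and no further computation is required. The only genuine content lies in the reduction step; the intersection-equals-product identity is the single point demanding verification, and it is routine. I would also remark that, because the $F_i$ are pairwise disjoint, the $\pp_{F_i}$ are exactly the minimal primes of $I$, so this representation is the unique irredundant primary decomposition and the hypothesis is unambiguous.
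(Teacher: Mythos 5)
Your proposal is correct and is exactly the deduction the paper intends: the paper states this Corollary with no separate proof (``As a consequence one has''), relying on precisely your observation that for prime ideals generated by pairwise disjoint sets of variables the intersection equals the product, so $I$ is a transversal polymatroidal ideal and the preceding Proposition applies. Your verification of the intersection-equals-product identity is the same fact the paper itself uses inside the Proposition's proof, so nothing further is needed.
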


One of the most distinguished polymatroidal ideals is the ideal of Veronese type. Let
$S = K[x_1, \dots, x_n]$ and fix positive integers $d$ and $a_1, \dots, a_n$ with $1\leq a_1\leq \dots \leq a_n\leq d$.
The ideal of Veronese type of $S$ indexed by $d$ and $(a_1, \dots,a_n)$ is the ideal $I_{d;a_1, \dots, a_n}$
which is generated by those monomials $u=x_1^{u_1}\dots x_n^{u_n}$ of $S$ of degree $d$ with $u_i\leq a_i $ for each $1\leq i\leq n$.

 The set of associated prime ideals and depth of the ideal of Veronese type are described in \cite[Proposition 5.2 and Corollary 5.7]{HRV} as follows
\begin{equation}
\label{ass}
\Ass(S/I) = \{\pp_F: F\subset [n], \sum_{i=1}^na_i\geq d-1+|F|\quad \text {and} \;\; \sum_{i\not\in F}a_i\leq d-1\},
\end{equation}
and
\begin{equation}
\label{depthver}
\depth S/I=\max\{0, d+n-1-\sum_{i=1}^na_i\}.
\end{equation}

\begin{Proposition}
\label{veronese}
The ideal of Veronese type has maximal depth if and only if there exists a $\pp_F \in \Ass(S/I)$ where $|F|=\sum_{i=1}^na_i-(d-1)$.
\end{Proposition}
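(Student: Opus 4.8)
The plan is to recast maximal depth as a statement about the largest associated prime, and then to read it off from the two displayed formulas. Write $s=\sum_{i=1}^n a_i$. Since each $\pp_F$ is generated by exactly $|F|$ of the variables, $\dim S/\pp_F=n-|F|$, and therefore
\[
\mdepth S/I=n-\max\{|F|:\pp_F\in\Ass(S/I)\}.
\]
First I would extract from \eqref{ass} the uniform bound satisfied by every associated prime: the condition $\sum_{i=1}^n a_i\ge d-1+|F|$ says exactly that $|F|\le s-(d-1)$. Hence $\max\{|F|:\pp_F\in\Ass(S/I)\}\le s-(d-1)$, and the number $s-(d-1)$ that appears in the statement is precisely this ceiling.

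Next I would align this with the depth formula \eqref{depthver} through the elementary identity $n-(s-(d-1))=d+n-1-s$. In the range $s\le d+n-1$ the maximum in \eqref{depthver} is realized by its linear term, so $\depth S/I=d+n-1-s=n-(s-(d-1))$. Comparing with the formula for $\mdepth S/I$ above, the equality $\depth S/I=\mdepth S/I$ holds if and only if $\max\{|F|:\pp_F\in\Ass(S/I)\}=s-(d-1)$; since that maximum can never exceed $s-(d-1)$, this is exactly the statement that the ceiling is attained, i.e.\ that some $\pp_F\in\Ass(S/I)$ has $|F|=s-(d-1)$. This disposes of both implications within this range. Moreover, the existence of such an $F$ forces $s-(d-1)=|F|\le n$, that is $s\le d+n-1$, so the ``if'' direction automatically lands in this range and needs nothing more.

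It remains to handle $s>d+n-1$. Here \eqref{depthver} gives $\depth S/I=0$, whence $\mdepth S/I=0$ by the observation in Section~1 that $\depth M=0$ if and only if $\mdepth M=0$; thus $I$ has maximal depth automatically, and indeed one checks straight from \eqref{ass} that $\mm=\pp_{[n]}$ lies in $\Ass(S/I)$ (its two conditions read $s\ge d-1+n$ and $0\le d-1$), which is the truncation at $|F|=n$ of the ceiling $s-(d-1)$. I expect the only real obstacle to be this bookkeeping around the $\max\{0,\cdot\}$ in \eqref{depthver}: the whole argument turns on recognizing that the linear term $d+n-1-s$ equals $n$ minus the universal bound $s-(d-1)$ on $|F|$, so that maximal depth is equivalent to attaining that bound, with the depth-zero range separated off as above.
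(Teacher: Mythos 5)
Your treatment of the range $\sum_{i=1}^n a_i\le d+n-1$ is exactly the paper's proof, made precise: the paper reads off from (\ref{ass}) that an associated prime of largest conceivable height would have $|F|=\sum_{i=1}^n a_i-(d-1)$, computes $\mdepth S/I=n-|F|=d+n-1-\sum_{i=1}^n a_i$, matches this with (\ref{depthver}), and says ``the conclusion follows'' --- never mentioning the $\max\{0,\cdot\}$ and leaving the converse implicit. Your identity $n-\bigl(\sum_i a_i-(d-1)\bigr)=d+n-1-\sum_i a_i$, the reformulation $\mdepth S/I=n-\max\{|F|:\pp_F\in\Ass(S/I)\}$, and the observation that the ``if'' direction forces $\sum_i a_i-(d-1)\le n$ are the same computation done carefully, with both implications spelled out; up to that point your write-up is correct and is an improvement in rigor over the paper's.

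The problem is the leftover range $\sum_{i=1}^n a_i>d+n-1$, which the paper silently ignores and which your last paragraph does not actually settle. There you correctly show $\depth S/I=\mdepth S/I=0$, so $I$ has maximal depth; but then no $F\subset[n]$ can satisfy $|F|=\sum_i a_i-(d-1)$, since that number exceeds $n$. Hence in this range the ``only if'' direction of the proposition, read literally, is \emph{false}, and your sentence identifying $\mm=\pp_{[n]}$ as ``the truncation at $|F|=n$ of the ceiling'' is a reinterpretation of the statement, not a verification of it. A concrete instance: $n=2$, $d=2$, $a_1=a_2=2$ gives $I=\mm^2\subset K[x_1,x_2]$, with $\Ass(S/I)=\{\mm\}$ and $\depth S/I=\mdepth S/I=0$, yet $\sum_i a_i-(d-1)=3>2$, so no admissible $F$ exists. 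The repair is easy --- either add the hypothesis $\sum_{i=1}^n a_i\le d+n-1$, or restate the condition as $|F|=\min\bigl\{n,\sum_{i=1}^n a_i-(d-1)\bigr\}$ --- and under either repair your argument goes through verbatim. This defect originates in the paper's statement and proof rather than in your work, but a proof cannot absorb it with the word ``truncation'': you should say explicitly that in that range the stated equivalence fails and the proposition needs to be amended.
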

\begin{proof}
In view of (\ref{ass}), $\pp_F$ has the maximum height if $|F|=\sum_{i=1}^na_i-(d-1)$. Thus $\mdepth S/I=n-|F|=n+d-1-\sum_{i=1}^na_i$ which is the same as $\depth S/I$ by (\ref{depthver}).
Therefore, the conclusion follows.

\end{proof}
Here is an example
\begin{Example}{\em
Consider $I=I_{5; 1, 2, 3}\subset S=K[x_1, x_2, x_3].$ Then $I=(x_1^2x_2^2x_3, x_1^3x_2x_3, x_1^3x_2^2)$. Formula (\ref{ass}) yields
\[
\Ass(S/I)=\{(x_1), (x_2), (x_1, x_2), (x_1, x_3), (x_2, x_3) \}.
\]
As $\pp_F \in \Ass(S/I)$ with $|F|=2$, $I$ has maximal depth and $\depth S/I=\mdepth S/I=1$.  }
\end{Example}

\section{Powers of ideals}

A subset $D\subset [n]$ is called an {\em independent set} of $G$ if $D$ contains no set $\{i,j\}$ which is an edge of $G$. The graph $G$ is called {\em bipartite} if $V(G)$ is  the disjoint union of $V_1$ and $V_2$ such that $V_1$ and $V_2$ are independent sets.
The bipartite graph $G$ is called a {\em complete bipartite} graph if $\{i, j \}\in E(G)$ for all $i\in V_1$ and $j \in V_2$.
\begin{Proposition}
\label{Complete}
Let $G$ be a complete bipartite graph on the vertex set $V$ with bipartition $V=V_1\union V_2$ where  $V_1=\{v_1,\ldots,v_n\}$ and $V_2=\{w_1,\ldots,w_m\}$ with $1\leq n\leq m$. Then $G$ has maximal depth if and only if $n=1$, i.e., $G$ is a star graph.
\end{Proposition}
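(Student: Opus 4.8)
The plan is to compute $\depth(S/I)$ and $\mdepth(S/I)$ separately for the edge ideal $I = I(G)$ and then compare the two. Write $S = K[v_1, \dots, v_n, w_1, \dots, w_m]$, a polynomial ring in $n+m$ variables. Since $G$ is complete bipartite, every edge has the form $\{v_i, w_j\}$, so $I = (v_i w_j : i \in [n],\, j \in [m]) = \pp_{V_1}\pp_{V_2} = \pp_{V_1} \sect \pp_{V_2}$, where $\pp_{V_1} = (v_1, \dots, v_n)$ and $\pp_{V_2} = (w_1, \dots, w_m)$ lie in disjoint sets of variables. The first thing I would verify is that $\pp_{V_1}$ and $\pp_{V_2}$ are the only minimal primes: any vertex cover omitting some $v_i$ must contain all of $V_2$ in order to cover the edges $\{v_i, w_j\}$, and symmetrically, so $V_1$ and $V_2$ are the only minimal vertex covers. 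As $I$ is squarefree, $\Ass(S/I) = \Min(I) = \{\pp_{V_1}, \pp_{V_2}\}$.

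From here $\mdepth(S/I)$ is immediate: since $\dim S/\pp_{V_1} = m$ and $\dim S/\pp_{V_2} = n$, we get $\mdepth(S/I) = \min\{m, n\} = n$, using the hypothesis $n \leq m$.

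The crux is to show that $\depth(S/I) = 1$ for every complete bipartite $G$. I would feed the decomposition $I = \pp_{V_1} \sect \pp_{V_2}$ into the Mayer--Vietoris sequence
\[
0 \to S/I \to (S/\pp_{V_1}) \oplus (S/\pp_{V_2}) \to S/(\pp_{V_1} + \pp_{V_2}) \to 0.
\]
Here $\pp_{V_1} + \pp_{V_2} = \mm$, so the right-hand term is $K$, of depth $0$, while the middle term is a direct sum of two polynomial rings and hence has depth $\min\{m, n\} = n \geq 1$. Because the middle depth strictly exceeds the right-hand depth, part (c) of the Depth Lemma (Lemma \ref{Depth lemma}) yields $\depth(S/I) = 0 + 1 = 1$. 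Alternatively one may invoke Fact \ref{depth}: $I = \pp_{V_1}\pp_{V_2}$ is transversal polymatroidal, its associated graph on two vertices has no edge (as $V_1 \sect V_2 = \emptyset$) and thus two components, so $\depth(S/I) = 2 - 1 + (n+m) - (n+m) = 1$.

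Combining the two computations, $I$ has maximal depth iff $\depth(S/I) = \mdepth(S/I)$, i.e.\ iff $1 = n$, which is exactly the star case $G = K_{1,m}$. The main obstacle is really only the depth computation; once $I$ is recognized in the clean form $\pp_{V_1} \sect \pp_{V_2}$, both the Depth Lemma and the transversal polymatroidal formula deliver $\depth(S/I) = 1$ at once, and the $\mdepth$ side reduces to a direct height count. For the easy direction one could instead note that $K_{1,m}$ is a tree, hence sequentially Cohen--Macaulay and therefore of maximal depth.
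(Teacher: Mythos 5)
Your proof is correct and follows essentially the same route as the paper: the paper also writes $I(G)=\pp_1\sect\pp_2$, applies the Depth Lemma to the exact sequence $0\to S/(\pp_1\sect\pp_2)\to S/\pp_1\dirsum S/\pp_2\to S/(\pp_1+\pp_2)\to 0$ to get $\depth =1$, and computes $\mdepth =n$ from $\Ass(S/I)=\{\pp_1,\pp_2\}$. Your two additions (the depth formula for transversal polymatroidal ideals, and the sequentially Cohen--Macaulay argument for the star case) are valid alternatives but not needed.
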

\begin{proof}
The edge ideal of $G$ is $\pp_1 \cap \pp_2$ where $\pp_1=(x_1, \dots, x_n)$ and $\pp_2=(y_1, \dots, y_m)$. We set $S=K[x_1, \dots, x_n, y_1, \dots, y_m]$ and $R=S/(\pp_1 \cap \pp_2)$. Consider the exact sequence $0\to S/(\pp_1 \cap \pp_2) \to S/\pp_1 \dirsum  S/\pp_2 \to S/(\pp_1+\pp_2) \to 0.$ Since $\depth S/(\pp_1+ \pp_2)=0$, it follows from Lemma \ref{Depth lemma}(Depth lemma) that
 $\depth R=1$. On the other hand, $\Ass(R)=\{ \pp_1, \pp_2\}$. It follows that $\mdepth R=n$.
Consequently, $\mdepth R-\depth R=n-1$. Therefore, the conclusion follows.
\end{proof}
\begin{Remark}{\em
In Proposition \ref{Complete}, we showed $\mdepth R-\depth R=n-1$. Thus, the difference between $\depth$ and $\mdepth$ can be any number.
 }
\end{Remark}



In the following we classify all connected bipartite graph such that $I^k$ has maximal depth for all $k\gg 0$.

\begin{Proposition}
Let $G$ be a connected bipartite graph and $I=I(G)$ its edge ideal. Then $I^k$ has maximal depth for $k\gg 0$ if and only if $G$ is a star graph.
\end{Proposition}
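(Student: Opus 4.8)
The plan is to separate the two invariants whose comparison defines maximal depth and to show that each stabilizes along the powers. Write $n=|V(G)|$ and $I=I(G)$. Since $G$ is bipartite, $I$ is normally torsion-free, so $\Ass(S/I^k)=\Ass(S/I)=\Min(I)$ for all $k\geq 1$; hence $\mdepth S/I^k=n-\bight I$ is the \emph{same} for every $k$. By Brodmann's theorem the sequence $\depth S/I^k$ is eventually constant. Therefore $I^k$ has maximal depth for $k\gg 0$ if and only if the stable value $\lim_{k\to\infty}\depth S/I^k$ equals $n-\bight I$, and the whole problem reduces to computing this limit and to deciding when $\bight I=n-1$.

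First I would compute the limit depth and claim it equals $1$ for every connected bipartite $G$ with at least one edge. For the upper bound I invoke the standard inequality $\lim_{k\to\infty}\depth S/I^k\leq n-\ell(I)$, where $\ell(I)$ is the analytic spread, together with the fact that $\ell(I)=n-1$ for a connected bipartite graph: the fiber cone is generated by the monomials $x_ix_j$ with $\{i,j\}\in E(G)$, whose exponent vectors $e_i+e_j$ all satisfy $\sum_{i\in V_1}t_i=\sum_{j\in V_2}t_j$, while a spanning tree already provides $n-1$ linearly independent such vectors; hence the fiber cone has dimension $n-1$ and $\lim_k\depth S/I^k\leq 1$. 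For the lower bound, $\mm=\pp_V\notin\Min(I)$, since for each vertex $v$ the set $V\setminus\{v\}$ already covers every edge, so $V$ is not a minimal cover; thus $\mm\notin\Ass(S/I^k)$ and $\depth S/I^k\geq 1$ for all $k$. Combining the two bounds gives $\lim_k\depth S/I^k=1$.

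It then remains to decide when $n-\bight I=1$, i.e. when $\bight I=n-1$, and here I would argue combinatorially with minimal vertex covers. For $n\geq 2$ the complement $V\setminus\{v\}$ of a single vertex is always a vertex cover, and it is a \emph{minimal} one exactly when every other vertex is adjacent to $v$ (otherwise a further vertex could be removed). In a connected bipartite graph a vertex adjacent to all others forces its side of the bipartition to be the singleton $\{v\}$, that is $G=K_{1,n-1}$ is a star; conversely the leaves of a star form a minimal vertex cover of size $n-1$. Hence $\bight I=n-1$ if and only if $G$ is a star, and combining with the previous paragraph yields the stated equivalence.

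I expect the only genuinely delicate point to be the justification that $\lim_k\depth S/I^k=1$, since it rests on two external inputs: the normally torsion-free property of bipartite edge ideals (used both to fix $\Ass(S/I^k)$ and hence to freeze $\mdepth$) and the analytic-spread computation $\ell(I)=n-1$. The pleasant feature of this route is that I never need the sharp equality $\lim_k\depth S/I^k=n-\ell(I)$: the elementary inequality $\lim\leq n-\ell(I)$ combined with the trivial bound $\depth\geq 1$ already pins the limit at $1$, so the argument stays self-contained modulo these two well-known facts.
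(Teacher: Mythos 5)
Your proposal is correct, and it follows the same overall skeleton as the paper's proof---freeze $\Ass(S/I^k)$ using bipartiteness, identify the stable value of $\depth S/I^k$ as $1$, and then characterize combinatorially when $\mdepth=1$---but it differs in how the two key inputs are obtained. The paper simply cites \cite[Corollary 10.3.18]{HH} for $\depth S/I^k=1$ when $k\gg 0$, and cites \cite{LT} for the stability $\Ass(I)=\Ass(I^k)$; you instead derive the depth statement yourself from Brodmann's inequality $\lim_k\depth S/I^k\leq n-\ell(I)$, the computation $\ell(I)=n-1$ via the incidence vectors of a spanning tree, and the trivial lower bound $\depth S/I^k\geq 1$ coming from $\mm\notin\Ass(S/I^k)$, while your Ass-stability comes from the Simis--Vasconcelos--Villarreal theorem that bipartite edge ideals are normally torsion-free. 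Your route also organizes the proof as a single equivalence chain (maximal depth for $k\gg0$ $\iff$ $\bight I=n-1$ $\iff$ $G$ is a star), whereas the paper argues the two implications separately: its converse direction goes through Proposition \ref{Complete} for the star graph together with the elementary observation that $\mdepth=1$ forces $\depth=1$. What your version buys is self-containedness (modulo two classical theorems) and an explicit proof of the combinatorial step---that a minimal vertex cover of size $n-1$ in a connected bipartite graph forces a star---which the paper states in one terse sentence; what the paper's version buys is brevity, since both nontrivial analytic facts are available off the shelf in \cite{HH} and \cite{LT}.
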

\begin{proof}
Suppose $I^k$ has maximal depth for $k\gg 0$. By \cite[Corollary 10.3.18]{HH}, we have  $\depth S/I^k=1$ for $k\gg 0$. Hence $\mdepth S/I^k=1$ for $k\gg 0$. As $G$ is bipartite, we have $\Ass (I)=\Ass(I^k)$ for all $k$, see \cite{LT}. It follows that $\mdepth S/I=1$ and hence $\depth S/I=1.$
Thus there exists a minimal vertex cover $F$ of $G$ such that $|[n]\setminus F|=1$. Therefore, $G$ is a star graph.

Now, suppose $G$ is a star graph and $J$ is its edge ideal. By Proposition \ref{Complete} we have $\depth S/J=\mdepth S/J=1$. As $G$ is bipartite, we have $\mdepth S/J^k=1$ for all $k$. It follows that $\depth S/J^k=1$ for all $k$ and hence $J^k$ has maximal depth for $k\gg 0$.
\end{proof}
\begin{Remark}{\em
Let $I$ be an ideal in a Noetherian ring $R$.  Brodmann [3] showed that $\Ass(I^k)=\Ass(I^{k+1})$
for all $k\gg 0$. The ideal $I$ for which $\Ass(I^k)\subset \Ass(I^{k+1})$ for all $k\geq 1$, is said to satisfy the persistence property. Edge ideals of graphs and  polymatroidal ideals have persistence property, see \cite{HRV}, \cite{MMV}.
In this case, we have  $\mdepth S/I^{k+1}\leq \mdepth S/I^k$ for all $k$ and say the ideal $I$ has non-increasing mdepth.}
\end{Remark}
\begin{center}
{Acknowledgment}
\end{center}
\hspace*{\parindent}

I would like to thank J\"urgen Herzog for helpful discussions on this work. I would also like to thank the referee for helpful comments on this article.

\bigskip


\begin{thebibliography}{99}

\bibitem{BH} W.\ Bruns and J.\ Herzog, "Cohen--Macaulay rings" (Revised edition), Cambridge Studies
in  Advanced Mathematics {\bf 39}, Cambridge University Press, 1998. Zbl 0788.13005, MR1251956

\bibitem{CO} A.Capani, G.Niesi, L.Robbiano:  CoCoA, a system for doing Computations in Commutative Algebra. http://cocoa.dima.unige.it. /research/publications.html, 1995.

\bibitem{F} S.\  Faridi, Simplicial trees are sequentially Cohen-Macaulay,  J. Pure Appl. Algebra, {\bf  190} (2004), 121-136. Zbl 1045.05029, MR2043324

\bibitem{FH} C.A. Francisco, H. T. Ha, Whiskers and sequentially Cohen--Macaulay graphs,
J. Combin. Theory Ser. A 115 (2008) 304-316. Zbl 1142.13021,  MR2382518


\bibitem{FT} A. Fr\"ubis-Kruger and N. Terai, Bounds for the regularity of monomial ideals, Mathematiche
(Catania) 53 (Suppl.) (1998) 83-97.

\bibitem{HH} J.\ Herzog and T.\ Hibi, Monomial Ideals. GTM 260. Springer 2010. Zbl 1206.13001, MR 2724673

\bibitem{HRV} J.\ Herzog, A.\ Rauf, M.\ Vladoiu, The stable set of associated prime ideals of a polymatroidal ideal, J. Algebr. Comb. {\bf 37} (2) (2013), 289-312. Zbl 1258.13014, MR3011344

  \bibitem{SJ} S.\  Jacques, Betti Numbers of Graph Ideals, Ph.D. Thesis, University of Sheffield, 2004.

\bibitem{LT}    H.\  M.\  Lam, N.\  V.\ Trung, Associated primes of powers of edge ideals and ear decompositions of graphs, Trans. Amer. Math. Soc., to appear.

\bibitem{MMV} J. Martines-Bernal, S. Morey, R. H. Vilarreal, Associated primes of powers of edge ideals, Collect. Math. {\bf 63}(2012), 361-374. Zbl 06175355, MR2957976

\bibitem{MSY} E.\ Miller, B.\ Sturmfels, and K.\ Yanagawa, Generic and cogeneric monomial ideals, J.
Symb. Comput., 29 (2000), 691-708. Zbl 0955.13008, MR1769661

\bibitem{R} A.\ Rahimi, Maximal depth property of finitely generated modules, J. Algebra Appl., {\bf 17}, No. 11 (2018) 185020--12.

\bibitem{V} R.\ H.\ Villarreal, Monomial Algebras, Second Edition, Monographs and Research Notes in Mathematics, Chapman and Hall/CRC, 2015. Zbl 1325.13004, MR 3362802

\end{thebibliography}
\end{document}